\renewcommand\nompreamble{\begin{multicols}{2}}
\renewcommand\nompostamble{\end{multicols}}
\newcolumntype{R}[1]{>{\raggedleft\arraybackslash}m{#1}} 
\newcolumntype{L}[1]{>{\raggedright\arraybackslash}m{#1}} 
\newcolumntype{C}[1]{>{\centering\arraybackslash}m{#1}}
\renewcommand{\fnum@figure}{\small\textbf{\figurename~\thefigure}} 
\numberwithin{equation}{section}%
\newtheorem{theorem}{Theorem}[section]
\theoremstyle{definition}
\newtheorem{definition}{Definition}[section]
\theoremstyle{remark}
\theoremstyle{remarks}
\renewcommand{\div}{\text{\div}}
\renewcommand{\div}{\text{div}}
\def\be{\begin{equation}}
\def\ee{\end{equation}}
\def\ba{\begin{aligned}}
\def\ea{\end{aligned}}
\def\bes{\begin{equation*}}
\def\ees{\end{equation*}}
\def\bc{\begin{cases}}
\def\ec{\end{cases}}
\numberwithin{equation}{section}
\title{\textbf{Bayesian Inference for a Time-Fractional HIV Model with Nonlinear Diffusion}}
\author{Mohamed BenSalah \thanks{ Mohamed BenSalah,\hfil\break
Department of Computer Science \newline
ISSAT of Sousse, University of Sousse, Rue Tahar Ben Achour, Sousse 4003, Tunisia,\hfil\break
E-Mail : mohamed.bensalah@fsm.rnu.tn} \, ,  Salih Tatar \thanks{ Salih Tatar, Corresponding Author \hfil\break
Department of Mathematics $\&$  Computer Science, College of Science and General Studies,\newline
Alfaisal University, Riyadh, KSA,\hfil\break
E-Mail : statar@alfaisal.edu} \, , S\"{u}leyman Ulusoy \thanks{ S\"{u}leyman Ulusoy,\hfil\break
Department of Mathematics and Physics, Faculty of Arts and Sciences,\newline
American  University of Ras Al Khaimah, Ras Al Khaimah, UAE\hfil\break
E-mail : suleyman.ulusoy@aurak.ac.ae}  } 
\begin{document}
\maketitle

\begin{abstract}
This study investigates an inverse problem associated with a time-fractional HIV infection model incorporating nonlinear diffusion. The model describes the dynamics of uninfected target cells, infected cells, and free virus particles, where the diffusion terms are nonlinear density functions. The primary objective is to recover the unknown diffusion functions by utilizing final-time measurement data. Due to the inherent ill-posedness of the inverse problem and the presence of measurement noise, we employ a Bayesian inference framework to obtain stable and reliable estimates while quantifying uncertainty. To solve the inverse problem efficiently, we develop an Iterative Regularizing Ensemble Kalman Method (IREKM), which enables the simultaneous estimation of multiple diffusion terms without requiring gradient information. Numerical experiments validate the effectiveness of the proposed method in reconstructing the unknown diffusion terms under different noise levels, demonstrating its robustness and accuracy. These findings contribute to a deeper understanding of HIV infection dynamics and provide a computational approach for parameter estimation in fractional diffusion models.
\end{abstract}
\maketitle

\noindent {{\bf Keywords:} HIV model, time-fractional derivative, Nonlinear density function, Inverse problem, Bayesian inference; ensemble Kalman method} \\

\noindent {{\bf AMS Subject Classifications:} 26A33, 35R11, 35R30, 62F15, 92D25 

\section{Introduction} \label{(sec1)}
In this paper, we study an inverse problem for the following time-fractional HIV infection model: 
\begin{eqnarray}\label{eq1}
\left \{ \begin{array}{l}
\partial_t ^\alpha u_1 - \div(D_1(u_1) \nabla u_1)  = - du_1 - \beta(u_1, u_3) - \sigma(u_1, u_2), (x, t) \in \Omega_T,  \\
\partial_t ^\alpha u_2 -   \div(D_2(u_2) \nabla u_2)  = \beta(u_1, u_3) + \sigma(u_1, u_2) - ru_2, \quad (x, t) \in \Omega_T,  \\
\partial_t ^\alpha u_3 -  \div(D_3(u_3) \nabla u_3)  = Nu_2 - eu_3, \qquad \qquad \qquad \qquad (x, t) \in \Omega_T,  \\
u_1(x,t) = u_2(x, t) = u_3(x, t) = 0,    ~ \qquad \qquad \qquad \qquad \qquad (x, t) \in \Gamma_T,\\
u_1(x,0) = u_{1,0}(x), u_2(x,0) = u_{2,0}(x), u_3(x,0) = u_{3,0}(x), \qquad   x \in \Omega,
\end{array} \right.
\end{eqnarray}
where $\partial_t ^\alpha$ is the Caputo fractional derivative of order $0 < \alpha <1$,   $\Omega \subset \mathbb{R}^n$ is a bounded domain with a boundary $\partial \Omega$, $T > 0$ is a final time,  $\Omega_T = \Omega \times (0,T)$, $\Gamma_T = \partial \Omega \times (0,T)$, $u_1$ is the concentration of healthy cells, $u_2$ is the concentration of infected cells, $u_3$ is the concentration of healthy cells virions, $D_1(u_1), D_2(u_2)$ and $D_3(u_3)$ are the nonlinear density functions for $u_1, u_2$ and $u_3$ respectively, $d = d(x,t) >0$ is the death rate for healthy cells, $N = N(x,t) > 0$ is the virus production rate, $e = e(x,t) > 0$ is the death rate of virions, $r = r(x,t) > 0$ is the death rate of infected cells, $\beta(u_1, u_3) = \beta_1 \,\frac{u_1u_3}{1+u_1}$, $\sigma(u_1, u_2) = \beta_2 \, \frac{u_1u_2}{1+u_1}$, $\beta_1 = \beta_1(x, t) > 0$ is the virus infection rate, $\beta_2 = \beta_2(x, t) > 0$ is the cell-cell infection rate. We assume that $u_{1,0}(x), u_{2,0}(x)$ and $u_{3,0}(x)$ are in $L_2(\Omega)$, the set of square-integrable functions in $\Omega$ and $d(x,t), \beta_1(x,t), \beta_2(x,t), r(x, t), N(x, t), e(x, t)$ are in $C(\Omega_T)$, the set of continuous functions on  $\Omega_T$. For the given inputs $\alpha$, $T$, $D_1(u_1), D_2(u_2), D_3(u_3)$, $d$, $\beta$, $\sigma$, $r$, $e$, $N$, $u_{1,0}$,  $u_{2,0}$, $u_{3,0}$, the problem (\ref{eq1}) is called the direct problem.  \\

HIV (human immunodeficiency virus) attacks immune cells, impairing the body's ability to combat infections and making individuals more susceptible to opportunistic diseases. If left untreated, HIV can progress to AIDS (acquired immunodeficiency syndrome). In recent years, several mathematical models have been developed to describe the dynamics of HIV infection and to analyze epidemic patterns associated with AIDS. For instance, in \cite{1}, a reaction-diffusion within-host HIV model incorporating cell mobility, spatial heterogeneity, and cell-to-cell transmission is proposed. The authors investigate the well-posedness and global dynamics of the model and analyze the influence of spatial heterogeneity and diffusion properties on virion propagation through numerical simulations. In \cite{2}, the authors introduce a mathematical model to examine the interaction between drug addiction and the spread of HIV/AIDS in Iranian prisons, studying the stability of the system. A separate study \cite{3} formulates an ordinary differential equation model describing the dynamics of HIV/AIDS, focusing on control strategies and determining optimal conditions for disease management. Furthermore, \cite{4} presents a preliminary analysis of HIV transmission dynamics, leveraging quantitative epidemiological data and simple deterministic models to describe viral spread and persistence. In \cite{5}, a mathematical model of HIV/AIDS transmission through sexual contact is proposed, incorporating asymptomatic and symptomatic phases as a system of differential equations. An epidemic model with multiple stages of infection and its global stability is analyzed in \cite{6}. Another study \cite{7} proposes an HIV dynamics model under distinct HAART (highly active antiretroviral therapy) regimens, predicting disease progression across all infection stages. However, fractional differential equations offer significant advantages over integer-order models in capturing the behavior of dynamical systems (see \cite{8} - \cite{17} and references therein). Based on this premise, the authors in \cite{18} modified the model from \cite{1} to study HIV spread and transmission dynamics within the host. The modifications include introducing time-fractional derivatives, density-dependent diffusion operators, and Holling type II functional responses to better capture HIV infection mechanisms within healthy cells. It is more realistic to consider density-dependent diffusion rather than linear diffusion in biological models, as the movement of infected cells through normal tissue resembles diffusion in a porous medium. Consequently, the random motility of infected cells is best represented as a function of the system’s state variables.\\

Recently, there has been growing interest in inverse problems involving fractional derivatives. In most of these studies, a fractional time derivative is considered, and the inverse problem typically involves determining this derivative, a source term, or a coefficient under some additional conditions. These problems are of significant physical and practical importance. However, the majority of existing research focuses on linear models (see \cite{19} - \cite{31} and the references therein). Inverse problems for nonlinear and nonlocal models constitute a relatively new area of study, with only a limited number of experimental investigations and analytical results. For instance, in \cite{nl1}, a nonlinear time-fractional inverse coefficient problem is examined, where the unknown coefficient depends on the gradient of the solution. The authors prove the existence of a unique solution to the direct problem and establish the existence of a quasi-solution for the inverse problem within the class of admissible coefficients. A similar problem, in which the unknown coefficient depends on the solution itself, is studied in \cite{nl2}. In \cite{nl3}, the authors analyze numerical solutions for both the direct and inverse problems of a nonlinear, nonlocal time-fractional equation. They propose a numerical method based on discretizing the minimization problem and employing the steepest descent method alongside a least-squares approach to solve the inverse problem. The direct and inverse problems for a nonlinear time-fractional diffusion equation are investigated in \cite{nl4}, where a quasi-Newton optimization method is applied to obtain the numerical solution of the inverse problem. To address the ill-posedness of the inverse problem, Tikhonov regularization is utilized. In \cite{nl5}, an iterative method of lines scheme is developed for the numerical solution of the time-fractional Richards equation with implicit Neumann boundary conditions. Additionally, \cite{nl6} examines an inverse problem for an inhomogeneous time-fractional diffusion equation in a one-dimensional real-positive semiaxis domain. The authors demonstrate that the inverse problem is severely ill-posed and apply a modified regularization method based on frequency-domain solutions to address this issue. In the present study, we investigate an inverse problem for the direct problem (\ref{eq1}). Unlike the aforementioned studies, we consider a nonlinear and nonlocal system with nonlinear functions on the side, where three unknown nonlinear density functions must be simultaneously determined as part of the inverse problem. This approach represents a novel contribution to the field of inverse problems. Our study can be regarded as a continuation of the series of works on fractional inverse problems discussed above.

This paper is organized as follows: In the next section, we introduce the direct and inverse problems, providing the necessary mathematical background and formulation.  Section \ref{bayes} presents the Bayesian inference framework and the Iterative Regularizing Ensemble Kalman Method (IREKM) used for solving the inverse problem. Section \ref{num} details numerical experiments that validate the proposed approach, including discussions on reconstruction accuracy, noise sensitivity, and computational performance. Finally, Section \ref{remarks} provides concluding remarks, summarizing key findings and potential directions for future research.

\section{The direct and the inverse problems} \label{(sec2)}

As mentioned in the Section \ref{(sec1)}, the fractional-time derivative $\partial_t^\alpha $ considered in (\ref{eq1}) is the Caputo fractional derivative of order $0 < \alpha < 1$ and is defined by
\begin{eqnarray}\label{eq2}
\partial_t^\alpha u(x,t) := \frac {1}{\Gamma (1-\alpha)} \int_0^t {(t-\xi)^{-\alpha}}   \frac{\partial u(x,\xi)}{\partial \xi } d \xi,
\end{eqnarray}
where  $\Gamma$ is the Gamma function.  Kilbas et al  \cite{Kil} and Podlubny  \cite{podbook} can be referred for properties of the Caputo fractional derivative.

\begin{definition} \cite{18} A set $ \mathbb{D}$ satisfying the following conditions is called the class of admissible coefficients :
\begin{enumerate}[label=(\roman*)]
\item The Carath\'eodory functions $D_i(s) : \Omega_T \times  \mathbb{R}^n \to \mathbb{R}, i = 1, 2, 3$, are continuous functions with respect to $s$. 
\item $\vert D_i(s) \xi \vert \leq \gamma_i \big [\wedge_i (x, t) + \vert \xi \vert \big] $, $\gamma_i > 0$, $\xi \in \mathbb{R}^n$, $\wedge_i (x, t) \in L_2(\Omega_T)$, $i = 1, 2, 3$.
\item $\big(D_i(s) \xi - D_i(\hat {s}) \hat {\xi} \big) \big(\xi - \hat {\xi}  \big) \geq  0$, $\xi, \hat {\xi}  \in \mathbb{R}^n$, $i = 1, 2, 3$.
\item $\vert D_i(s) \xi \vert . \, \xi  \geq b_i \vert \xi \vert ^2 $, $i = 1, 2, 3$.
\item $ D_i '(s) \leq L  $, $L > 0, i = 1, 2, 3$.
\end{enumerate}
\end{definition}
The conditions (ii) - (iv) are called Leray - Lions assumptions and arise in the solvability of the direct problem (\ref{eq1}), see \cite{18}, \cite{nl4}, \cite{nl5}, \cite{Boccardo}. Next, we define a weak solution to the direct problem (\ref{eq1}). 

\begin{definition} \cite{18},  A triplet of the functions $(u_1, u_2, u_3)$ is called a weak solution to the direct problem (\ref{eq1}) if $u_1, u_2, u_3 \in L^2\left( 0, T; H_0^1(\Omega) \right) \cap  W_2^\alpha \left( 0, T; L^2(\Omega) \right)$   such that the following integral identities hold for a.e. $t \in [0, T]$:
\begin{eqnarray}\label{eq3}
\left \{ \begin{array}{l}
 \int \limits_{\Omega} \partial_t ^\alpha u_1   \, \varphi_1 \, dx  +  \int \limits_{\Omega} D_1\left( u_1 \right)  \nabla u_1 \cdot \nabla \varphi_1 \, dx   = -\int \limits_{\Omega}  d \, u_1 \, \varphi_1 \, dx -\int \limits_{\Omega}  \beta(u_1, u_3) \, \varphi_1 \, dx  -\int \limits_{\Omega}  \sigma(u_1, u_2) \, \varphi_1 \, dx,\\
  \int \limits_{\Omega} \partial_t ^\alpha u_2   \, \varphi_2 \, dx  +  \int \limits_{\Omega} D_2\left( u_2 \right)  \nabla u_2 \cdot \nabla \varphi_2 \, dx   = \int \limits_{\Omega}  \beta(u_1, u_3) \, \varphi_2 \, dx  + \int \limits_{\Omega}  \sigma(u_1, u_2) \, \varphi_2 \, dx - \int \limits_{\Omega} r \, u_2 \, \varphi_2 \, dx,\\
  \int \limits_{\Omega} \partial_t ^\alpha u_3   \, \varphi_3 \, dx  +  \int \limits_{\Omega} D_3\left( u_3 \right)  \nabla u_3 \cdot \nabla \varphi_3 \, dx   = \int \limits_{\Omega} N\, u_2 \varphi_3 \, dx  - \int \limits_{\Omega} e \, u_3 \varphi_3 \, dx,
  \end{array} \right.
 \end{eqnarray}
for each $ \varphi_i  \in L^2\left( 0, T; H_0^1(\Omega) \right) \,   \cap \, W_2^\alpha \left( 0, T; L_2(\Omega) \right)$, i = 1, 2, 3, where  $$W_2^\alpha(0, T):=\bigg \{u \in L^2[0, T]:  \partial_t ^\alpha u \in L^2[0, T] \,  \bigg  \}$$  is the fractional Sobolev space of order $\alpha$.  Here we regard $u_i(x, t)$ as a mapping from $t \in (0, T)$ to $L^2(\Omega)$ and write $u_i(t) = u_i(\cdot, t)$.
\end{definition}
In \cite{18}, it is proved that the direct problem (\ref{eq1}) has a unique and non-negative weak solution. Now we prove the following theorem: 

\begin{theorem}\label{theorem1}

Let  $D_i, i = 1, 2, 3 \in \mathbb{D}$ and $(u_1, u_2, u_3) \in L^2\left( 0, T; H_0^1(\Omega) \right) \cap  W_2^\beta \left( 0, T; L^2(\Omega) \right)^3$ be the unique weak solution to the direct problem (\ref{eq1}). Then the following estimates holds: 

\begin{equation}\label{eq4}
\partial_t^\alpha  \bigg( \Vert u_1\Vert^2 + \Vert u_2\Vert^2 + \Vert u_3\Vert^2 \bigg) + c \, \bigg( \Vert u_1\Vert_{H_0^1(\Omega)}^2  + \Vert u_2\Vert_{H_0^1(\Omega)}^2 + \Vert u_3\Vert_{H_0^1(\Omega)}^2    \bigg)  \, \leq   C.
\end{equation}
\end{theorem}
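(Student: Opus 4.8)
The plan is to derive the energy estimate by testing each of the three weak formulations in \eqref{eq3} with the natural choice $\varphi_i = u_i$, summing the resulting identities, and then controlling each term using the structural assumptions (i)--(v) on the admissible class $\mathbb{D}$ together with the fractional chain-rule inequality for the Caputo derivative. First I would recall the key tool: for $u(\cdot,t)$ absolutely continuous enough, one has $\partial_t^\alpha \tfrac12\|u(t)\|^2_{L^2(\Omega)} \le \int_\Omega u\,\partial_t^\alpha u\,dx$ (the Alikhanov/fractional-convexity inequality, valid since $W_2^\alpha$ regularity is assumed). Applying this to each component turns $\int_\Omega \partial_t^\alpha u_i\, u_i\,dx$ into a lower bound by $\tfrac12\partial_t^\alpha\|u_i\|^2$, which after multiplying by $2$ produces the $\partial_t^\alpha(\|u_1\|^2+\|u_2\|^2+\|u_3\|^2)$ term on the left of \eqref{eq4}.

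Next I would handle the diffusion terms: choosing $\varphi_i = u_i$ gives $\int_\Omega D_i(u_i)\nabla u_i\cdot\nabla u_i\,dx$, and by the coercivity assumption (iv), $D_i(s)\xi\cdot\xi \ge b_i|\xi|^2$, this is bounded below by $b_i\|\nabla u_i\|^2_{L^2(\Omega)}$; combined with the Poincar\'e inequality (legitimate since $u_i\in H_0^1(\Omega)$) this yields a lower bound of the form $c\,\|u_i\|^2_{H_0^1(\Omega)}$, which is exactly the second group of terms on the left of \eqref{eq4}. Then I would estimate the right-hand (reaction) terms. The linear terms $\int_\Omega d\,u_1^2\,dx$, $\int_\Omega r\,u_2^2\,dx$, $\int_\Omega e\,u_3^2\,dx$ are bounded using the $C(\overline{\Omega_T})$ bounds on the coefficients by $C(\|u_1\|^2+\|u_2\|^2+\|u_3\|^2)$. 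The cross term $\int_\Omega N u_2 u_3\,dx$ is handled by Young's inequality, again absorbed into $C(\|u_2\|^2+\|u_3\|^2)$. The genuinely nonlinear Holling-type terms $\beta(u_1,u_3)=\beta_1\frac{u_1u_3}{1+u_1}$ and $\sigma(u_1,u_2)=\beta_2\frac{u_1u_2}{1+u_1}$ are the place to be a little careful: using non-negativity of the weak solution (proved in \cite{18}), $0\le \frac{u_1}{1+u_1}\le 1$, so $|\beta(u_1,u_3)|\le \beta_1 u_3$ and $|\sigma(u_1,u_2)|\le \beta_2 u_2$ pointwise, whence $\int_\Omega |\beta(u_1,u_3)u_i|\,dx$ and $\int_\Omega|\sigma(u_1,u_2)u_i|\,dx$ are again controlled via Young's inequality by $C(\|u_1\|^2+\|u_2\|^2+\|u_3\|^2)$.

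Putting these together gives
\begin{equation*}
\partial_t^\alpha\big(\|u_1\|^2+\|u_2\|^2+\|u_3\|^2\big) + c\big(\|u_1\|^2_{H_0^1(\Omega)}+\|u_2\|^2_{H_0^1(\Omega)}+\|u_3\|^2_{H_0^1(\Omega)}\big) \le C\big(\|u_1\|^2+\|u_2\|^2+\|u_3\|^2\big).
\end{equation*}
To reach the clean form \eqref{eq4} with a constant $C$ on the right, I would absorb the $\|u_i\|^2$ on the right into the $H_0^1$-terms on the left (possible by Poincar\'e, at the cost of shrinking $c$), and then invoke the fractional Gr\"onwall inequality (see e.g. the references to \cite{Kil}, \cite{podbook}) to bound $\|u_i(t)\|^2$ uniformly in $t\in[0,T]$ by a constant depending on $T$, $\alpha$, the coefficient bounds, and $\|u_{i,0}\|_{L^2(\Omega)}$; feeding this uniform bound back in yields the stated inequality with a fixed constant $C$. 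I expect the main obstacle to be the rigorous justification of the fractional convexity inequality $\int_\Omega u_i\,\partial_t^\alpha u_i\,dx \ge \tfrac12\partial_t^\alpha\|u_i\|^2$ at the level of regularity guaranteed by $W_2^\alpha(0,T;L^2(\Omega))$ — strictly this may require a density/approximation argument or citing the appropriate version of the inequality — and secondarily the careful bookkeeping to ensure all nonlinear reaction terms are genuinely subsumed by the quadratic energy, which hinges essentially on the non-negativity and boundedness of the Holling saturation factor.
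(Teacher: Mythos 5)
Your proposal follows essentially the same route as the paper's proof: test each weak identity with $\varphi_i = u_i$, use the Alikhanov inequality together with the coercivity assumption (iv), bound the Holling-type reaction terms via $\beta(u_1,u_3)\le \beta_1 u_3$, $\sigma(u_1,u_2)\le \beta_2 u_2$ and Young's inequality, and sum to reach the differential inequality $\tfrac12\partial_t^\alpha\sum\|u_i\|^2 + b\sum\|\nabla u_i\|^2 \le c\sum\|u_i\|^2$. The only difference is cosmetic: where you spell out the Poincar\'e absorption and a fractional Gr\"onwall argument, the paper concludes by citing Lemma 2.4 of \cite{frontier}, which plays exactly that role.
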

\begin{proof}
By taking $\varphi_1 = u_1$, $\varphi_2 = u_2$ and $\varphi_3 = u_3$ in (\ref{eq3}), we have
\begin{eqnarray}\label{eq5}
\left \{ \begin{array}{l}
 \int \limits_{\Omega} \partial_t ^\alpha u_1   \, u_1 \, dx  +  \int \limits_{\Omega} D_1\left( u_1 \right)  \vert \nabla u_1 \vert ^2 \, dx   = -\int \limits_{\Omega}  d \, u_1 ^2  \, dx -\int \limits_{\Omega}  \beta(u_1, u_3) \, u_1 \, dx  -\int \limits_{\Omega}  \sigma(u_1, u_2) \, u_1 \, dx,\\
  \int \limits_{\Omega} \partial_t ^\alpha u_2   \, u_2 \, dx  +  \int \limits_{\Omega} D_2\left( u_2 \right) \vert \nabla u_2 \vert ^2 \, dx   = \int \limits_{\Omega}  \beta(u_1, u_3) \, u_2 \, dx  + \int \limits_{\Omega}  \sigma(u_1, u_2) \, u_2 \, dx - \int \limits_{\Omega} r  \, u_2^2 \, dx,\\
  \int \limits_{\Omega} \partial_t ^\alpha u_3   \, u_3 \, dx  +  \int \limits_{\Omega} D_3\left( u_3 \right) \vert \nabla u_3 \vert ^2 \, dx   = \int \limits_{\Omega} N\, u_2 u_3 \, dx  - \int \limits_{\Omega} e \, u_3^2 \, dx.
  \end{array} \right.
 \end{eqnarray}
 
We evaluate the first integral identity  in (\ref{eq5}) as follows
 \begin{eqnarray}\label{eq6}
  \begin{split}
  \frac{1}{2} \partial_t ^\alpha \Vert u_1 \Vert ^2 + b_1    &   \Vert \nabla u_1 \Vert ^2   \overset{\text{Alikhanov ineq. and (iv)}}{\leq}  \int \limits_{\Omega} \partial_t ^\alpha u_1   \, u_1 \, dx  +  \int \limits_{\Omega} D_1\left( u_1 \right)  \vert \nabla u_1 \vert ^2 \, dx  \\
& \qquad \qquad  \qquad \quad  =   \int \limits_{\Omega}  d \, u_1 ^2  \, dx -\int \limits_{\Omega}  \beta(u_1, u_3) \, u_1 \, dx  -\int \limits_{\Omega}  \sigma(u_1, u_2) \, u_1 \, dx\\
& \qquad \qquad  \quad  \overset{\text{Young's ineq.}}{\leq}  C_1 \bigg( \Vert u_1\Vert^2 + \Vert u_2\Vert^2 + \Vert u_3\Vert^2 \bigg),
  \end{split}
 \end{eqnarray} 
where we used that $\beta(u_1, u_3) \leq \beta_1(x, t) u_3 $, $\sigma(u_1, u_2)  \leq \beta_2(x, t) u_2$ and $\beta_1(x,t), \beta_2(x,t)$ are in $C(\Omega_T)$. Then (\ref{eq6}) implies that 
 \begin{eqnarray}\label{eq7}
  \frac{1}{2} \partial_t ^\alpha \Vert u_1 \Vert ^2 + b_1  \Vert \nabla u_1 \Vert ^2  \leq   C_1 \bigg( \Vert u_1\Vert^2 + \Vert u_2\Vert^2 + \Vert u_3\Vert^2 \bigg).
 \end{eqnarray} 
 Similarly we  get that
 \begin{eqnarray}\label{eq8}
   \begin{split}
 &  \frac{1}{2} \partial_t ^\alpha \Vert u_2 \Vert ^2 + b_2  \Vert \nabla u_2 \Vert ^2  \leq   C_2 \bigg( \Vert u_2\Vert^2 + \Vert u_3\Vert^2 \bigg),\\
  & \frac{1}{2} \partial_t ^\alpha \Vert u_3 \Vert ^2 + b_3  \Vert \nabla u_3 \Vert ^2  \leq   C_3 \bigg( \Vert u_2\Vert^2 + \Vert u_3\Vert^2 \bigg). 
    \end{split}
 \end{eqnarray} 
 Finally, combining (\ref{eq7}) and (\ref{eq8}) together, we deduce that
 \begin{eqnarray}\label{eq9}
\frac{1}{2} \partial_t ^\alpha \bigg( \Vert u_1 \Vert ^2 +  \Vert u_2 \Vert ^2 +  \Vert u_3 \Vert ^2  \bigg) +  b \bigg(  \Vert \nabla u_1 \Vert ^2 +  \Vert \nabla u_2 \Vert ^2  + \Vert \nabla u_3 \Vert ^2   \bigg) \leq c \bigg( \Vert u_1\Vert^2 + \Vert u_2\Vert^2  + \Vert u_3\Vert^2  \bigg). 
 \end{eqnarray} 
By Lemma 2.4 in \cite{frontier}, we obtain the proof. 
\end{proof}


\begin{theorem}\label{theorem2}
Suppose that a sequence of coefficients $\{D_{i,n}\} \in \mathbb{D}$ converges pointwise in $[0, \infty)$ to a function $D_i(u) \in \mathbb{D},  i = 1, 2, 3$. Then the sequence of solutions $u_{i,n} := u(x, t; D_{i,n})$ converges to the solution $u_i := u(x, t; D_i)  \in L^2\left( 0, T; H_0^1(\Omega) \right) \cap  W_2^\beta \left( 0, T; L^2(\Omega) \right) $, for $i = 1,2,3.$
\end{theorem}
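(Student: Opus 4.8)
The plan is to combine the uniform a priori bound of Theorem~\ref{theorem1} with a compactness argument and the uniqueness of the weak solution established in \cite{18}. The key preliminary observation is that the constants $b$, $c$, $C$ in the estimate (\ref{eq4}) depend only on the structural data of the class $\mathbb{D}$ — the coercivity/growth constants $b_i,\gamma_i$, the Lipschitz bound $L$, the functions $\wedge_i$ — and on the fixed coefficients $d,r,e,N,\beta_1,\beta_2$ and initial data $u_{i,0}$, but \emph{not} on the particular member $D_{i,n}\in\mathbb{D}$. Hence (\ref{eq4}) holds uniformly in $n$ for the solutions $u_{i,n}$, and integrating it in time (via the fractional Gr\"onwall inequality, Lemma~2.4 in \cite{frontier}, together with a fractional integration) yields a bound $\norm{u_{i,n}}_{L^2(0,T;H_0^1(\Omega))}+\norm{u_{i,n}}_{L^\infty(0,T;L^2(\Omega))}\le M$ with $M$ independent of $n$, for $i=1,2,3$. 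Since (ii) and (iv) force $0<b_i\le D_{i,n}\le\gamma_i$, the diffusive flux $D_{i,n}(u_{i,n})\nabla u_{i,n}$ is bounded in $L^2(\Omega_T)$ uniformly in $n$, and then the equations (\ref{eq3}) give a uniform bound for $\partial_t^\alpha u_{i,n}$ in $L^2(0,T;H^{-1}(\Omega))$.

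With these bounds, I would invoke a fractional Aubin--Lions--Simon compactness lemma for the Caputo derivative of order $\alpha$ (for the triple $H_0^1(\Omega)\hookrightarrow\hookrightarrow L^2(\Omega)\hookrightarrow H^{-1}(\Omega)$) to extract a subsequence, not relabelled, such that for each $i=1,2,3$: $u_{i,n}\rightharpoonup v_i$ weakly in $L^2(0,T;H_0^1(\Omega))$, $u_{i,n}\to v_i$ strongly in $L^2(\Omega_T)$ and a.e.\ in $\Omega_T$, and $\partial_t^\alpha u_{i,n}\rightharpoonup\partial_t^\alpha v_i$ weakly in $L^2(0,T;H^{-1}(\Omega))$.

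The crux is to pass to the limit in the (time-integrated) identities (\ref{eq3}). The Caputo term and the linear reaction terms $d\,u_{1,n}$, $r\,u_{2,n}$, $e\,u_{3,n}$, $Nu_{2,n}$ converge by the above weak/strong convergences, and the Holling-type terms $\beta(u_{1,n},u_{3,n})$, $\sigma(u_{1,n},u_{2,n})$ converge a.e.\ by continuity while being dominated, hence converge strongly in $L^2(\Omega_T)$. The delicate term is $\int_{\Omega_T} D_{i,n}(u_{i,n})\,\nabla u_{i,n}\cdot\nabla\varphi_i\dxdt$. Here one first shows $D_{i,n}(u_{i,n})\to D_i(v_i)$ a.e.\ in $\Omega_T$: by (v) the family $\{D_{i,n}\}$ is equi-Lipschitz on $[0,\infty)$, so the pointwise convergence $D_{i,n}\to D_i$ is uniform on compact sets, and composition with the a.e.\ convergence $u_{i,n}\to v_i$ gives the claim; the uniform bound $D_{i,n}(u_{i,n})\le\gamma_i$ then upgrades this, by dominated convergence, to strong convergence in $L^2(\Omega_T)$. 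Consequently $D_{i,n}(u_{i,n})\nabla\varphi_i\to D_i(v_i)\nabla\varphi_i$ strongly in $L^2(\Omega_T)$, and pairing with the weak convergence $\nabla u_{i,n}\rightharpoonup\nabla v_i$ in $L^2(\Omega_T)$ gives $\int_{\Omega_T} D_{i,n}(u_{i,n})\nabla u_{i,n}\cdot\nabla\varphi_i\dxdt\to\int_{\Omega_T} D_i(v_i)\nabla v_i\cdot\nabla\varphi_i\dxdt$. Note that, since the nonlinearity enters only through the zeroth-order argument $D_i(u_i)$ and not through $\nabla u_i$, no Minty monotonicity argument is needed — strong $L^2$ convergence of $u_{i,n}$ suffices. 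Finally the common initial condition $u_{i,n}(\cdot,0)=u_{i,0}$ passes to the limit using the continuity in time provided by the $W_2^\alpha(0,T;L^2(\Omega))$ regularity, so $(v_1,v_2,v_3)$ is a weak solution of (\ref{eq1}) with coefficients $D_i$.

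By uniqueness of the weak solution (\cite{18}), $v_i=u_i$; and since the limit is independent of the extracted subsequence, the usual subsequence argument shows the whole sequence $u_{i,n}$ converges to $u_i$ — weakly in $L^2(0,T;H_0^1(\Omega))$ and strongly in $L^2(\Omega_T)$ — which is the assertion. The step I expect to be the main obstacle is the nonlinear diffusion term, specifically transferring the \emph{pointwise} convergence $D_{i,n}\to D_i$ into convergence of $D_{i,n}(u_{i,n})$ in a topology strong enough to be paired with the only weakly convergent gradients $\nabla u_{i,n}$; this relies essentially on the equi-Lipschitz hypothesis (v) and on the strong $L^2(\Omega_T)$ compactness of $\{u_{i,n}\}$, hence on having at hand an appropriate fractional Aubin--Lions lemma. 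A minor additional point is to verify that (ii)--(iv) really yield the uniform pointwise bounds on $D_{i,n}$ used above.
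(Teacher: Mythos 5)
Your argument is correct in outline and follows essentially the same route the paper relies on: the paper offers no proof of its own, delegating entirely to Lemma~4 of \cite{18}, which establishes this continuous-dependence result by the same scheme you describe (coefficient-uniform energy estimates as in Theorem~\ref{theorem1}, fractional Aubin--Lions compactness, passage to the limit in the weak formulation using the uniform bounds $b_i\le D_{i,n}\le\gamma_i$ and the equi-Lipschitz condition, then identification of the limit by uniqueness). Your write-up in fact supplies more detail than the paper does, and correctly isolates the only delicate point, the nonlinear diffusion term, which needs no Minty argument precisely because the coefficient depends on $u_i$ and not on $\nabla u_i$.
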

\begin{proof} The proof can be derived by closely following Lemma 4 in \cite{18}. 
\end{proof}



In this paper we study an inverse problem that consists of determining the functions $(U(x,t), D(x,t))$, where $U(x,t) = (u_1(x,t), u_2(x,t), u_3(x,t))$ and $D(u) = \big(D_1(u), D_2(u), D_3(u)\big)$, from the following final time measured data:
\begin{equation}\label{eq17}
u_i (x, T) = \psi_i (x),   i = 1, 2, 3, (x, t) \in \Omega.
 \end{equation}
For the consistency of the initial conditions in (\ref{eq1}) and the measured data in (\ref{eq17}), we assume that $ \psi_i(x) = 0,  i = 1, 2, 3, x \in \partial \Omega $. We denote by $U(x,t ; D)$ the solution to the direct problem (\ref{eq1}) for a given $D  \in \mathbb{D} $ and set the following input-output mapping: 

\begin{equation}\label{eq21}
\mathbb{F}_i(D_i): D_i  \longrightarrow  u_i (x, T)= \psi_i (x) ,   i = 1, 2, 3, x \in \partial \Omega,
 \end{equation}
 where $\mathbb{F}_i(D_i):  \mathbb{D} \longrightarrow  H_0^1(\Omega).$ By Theorem \ref{theorem2} , it is clear that input-output mapping is continuous. The following theorem is straightforward with Theorem \ref{theorem2} and Theorems 1.1 and 1.2 in \cite{Trillos}. 

\begin{theorem}
If the prior $\mu_0$ is any measure with $\mu_0 {(X)} = 1$, then the Bayesian inverse problem of recovering $D(u) = \big(D_1(u), D_2(u), D_3(u)\big)$ from the data $ d = \mathbb{F}(D) + \varepsilon$, $ \varepsilon \thicksim \mathcal{N}(0, C)$,  is well-defined and the solution in the Bayesian framework depends continuously on the data.
\end{theorem}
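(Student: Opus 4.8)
The plan is to invoke the general well-posedness theory for Bayesian inverse problems developed by García Trillos and coauthors \cite{Trillos}, and verify that the hypotheses of Theorems 1.1 and 1.2 there are met in our setting. Recall that the abstract framework requires: (a) a Polish space $X$ carrying the prior $\mu_0$ (here the space $\mathbb{D}$ of admissible coefficients, or rather a suitable metric subspace of it on which $\mu_0$ is supported), (b) a forward map $\mathbb{F} : X \to Y$ taking values in a separable Banach space $Y$ (here $Y = H_0^1(\Omega)^3$, with $\mathbb{F}(D) = (u_1(\cdot,T), u_2(\cdot,T), u_3(\cdot,T))$), and (c) an additive Gaussian noise model $d = \mathbb{F}(D) + \varepsilon$ with $\varepsilon \sim \mathcal{N}(0,C)$ on $Y$. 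Under these assumptions, if $\mathbb{F}$ is continuous (more precisely, continuous and at worst polynomially bounded, which is automatic here since $\mathbb{F}$ maps into a fixed bounded region by the a priori estimate of Theorem \ref{theorem1}), then the posterior $\mu^d(dD) \propto \exp\!\big(-\tfrac12 \|C^{-1/2}(d - \mathbb{F}(D))\|^2\big)\,\mu_0(dD)$ is a well-defined probability measure for $\mu_0$-a.e.\ (in fact every) $d$, and the map $d \mapsto \mu^d$ is Lipschitz continuous in the Hellinger metric.

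The key steps, in order, are as follows. First, I would record that the likelihood is well-defined: the misfit functional $\Phi(D; d) = \tfrac12 \|C^{-1/2}(d - \mathbb{F}(D))\|_Y^2$ is finite and measurable, because $\mathbb{F}$ is continuous (hence Borel measurable) by Theorem \ref{theorem2}, and because the a priori bound in Theorem \ref{theorem1} gives $\sup_{D \in \mathbb{D}} \|\mathbb{F}(D)\|_{H_0^1(\Omega)^3} < \infty$, so $\Phi$ is actually bounded above and below uniformly on $X$. Second, I would check the normalization constant $Z(d) = \int_X \exp(-\Phi(D;d))\,\mu_0(dD)$ is strictly positive and finite: finiteness is immediate from $\Phi \ge 0$ and $\mu_0(X)=1$, and strict positivity follows because $\Phi$ is bounded above on all of $X$, so $Z(d) \ge \exp(-\sup_X \Phi) > 0$. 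This shows the posterior $\mu^d$ is a genuine probability measure. Third, for the continuous dependence on the data, I would apply the Hellinger-stability estimate of \cite[Theorem 1.2]{Trillos}: for $d_1, d_2$ in any bounded set, $d_{\mathrm{Hell}}(\mu^{d_1}, \mu^{d_2}) \le L\, \|d_1 - d_2\|_Y$, where $L$ depends on that bounded set; this uses only the continuity and uniform boundedness of $\mathbb{F}$ together with the Lipschitz-on-bounded-sets property of $t \mapsto \tfrac12 t^2$, both already available. Finally, translating Hellinger convergence into continuity of posterior expectations of bounded (or suitably integrable) functionals completes the statement.

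The main obstacle, and the step deserving the most care, is matching the topological hypotheses of the abstract theory to the concrete space $\mathbb{D}$: the abstract results are typically phrased for a prior on a separable Banach or Polish space, whereas $\mathbb{D}$ as defined is a convex subset of a function space cut out by the Leray--Lions inequalities (ii)--(iv) plus the Lipschitz bound (v). One must argue that $\mathbb{D}$, equipped with (say) the topology of pointwise convergence on $[0,\infty)$ used in Theorem \ref{theorem2}, or with the $C$-norm induced by condition (v), is a Polish space (or embeds as a Borel subset of one) on which $\mu_0$ is a Radon probability measure, and crucially that the forward map is continuous with respect to \emph{that} topology — which is exactly the content of Theorem \ref{theorem2}, so the bridge is already built. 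A secondary point worth a sentence is that $\mu_0(X) = 1$ is stated as the hypothesis, so no concentration-of-measure or tail estimate on the prior is needed; this is why the theorem can be asserted for an essentially arbitrary prior, the uniform boundedness of $\mathbb{F}$ (Theorem \ref{theorem1}) doing the work that a polynomial-growth-plus-prior-moment condition would do in the general theory.
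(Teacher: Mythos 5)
Your proposal is correct and follows essentially the same route as the paper: the paper's entire argument is to note that the input--output map is continuous by Theorem \ref{theorem2} and then invoke Theorems 1.1 and 1.2 of \cite{Trillos}, exactly the two ingredients you organize (with the a priori bound of Theorem \ref{theorem1} and the verification of the normalization constant spelled out in more detail than the paper gives). Your extra care about the Polish-space structure of $\mathbb{D}$ is a reasonable elaboration of the same citation-based argument rather than a different approach.
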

\section{Bayesian framework}\label{bayes}
In this section, we present a numerical approach  to identifying the diffusion terms \( (D_1, D_2, D_3) \) in the context of the HIV infection model \eqref{eq1}. The available experimental data, denoted by \( y \), is assumed to correspond to key biological quantities: the concentrations of uninfected target cells \( u(x,t) \), infected cells \( v(x,t) \), and the free virus particles \( w(x,t) \), which are the solutions to the time-fractional HIV infection model. Specifically, the data points \( y = (u(x,T), v(x,T), w(x,T)) \) are measured at the final time \( T \), representing the observed spatial distribution of these biological quantities. These measurements provide critical insights into the dynamics of HIV infection, enabling the estimation of the diffusion terms that govern the spatial spread of the infection within tissue. Accurate identification of these terms is essential for understanding disease progression and optimizing treatment strategies.

Our objective is to estimate the diffusion terms \( (D_1, D_2, D_3) \) using the experimental data \( y \). However, the presence of measurement noise introduces significant uncertainty into the estimation process, complicating the inverse problem. Additionally, the inherent ill-posedness of the problem further exacerbates these challenges, necessitating the use of regularization techniques to obtain stable and reliable solutions.

To account for uncertainties arising from both observational noise and prior knowledge, we adopt a Bayesian inference framework. This approach not only estimates the unknown parameter vector \( D = (D_1, D_2, D_3) \) but also quantifies the associated uncertainty by constructing  the posterior probability distribution of the parameters. The theoretical foundations of this framework are well-established in the works of Chada et al. \cite{chada2018parameterizations}, Cotter et al. \cite{cotter2010approximation}, Dashti et al. \cite{dashti2011uncertainty}, Iglesias et al. \cite{iglesias2016regularizing, iglesias2015filter} , Kaipio et al. \cite{kaipio2006statistical} and Stuart et al. \cite{stuart2010inverse}. Furthermore, the numerical aspects of Bayesian inverse problems have been extensively studied by Yan et al. \cite{yan2017convergence}  - \cite{yan2019adaptive}.

To formalize the Bayesian framework, let us introduce \(\varepsilon\) as Gaussian noise with mean of zero and the covariance matrix \( C = \sigma^2 I \), where \( I \) is the identity matrix and \( \sigma \) is the noise standard deviation. The noisy observation data \( d \) can be expressed as:
\[
d = \mathbb{F}(D) + \varepsilon,
\]
where \( \mathbb{F}(D) \) represents the observation operator, which is defined by the experimental data \( y \).

In the Bayesian context, both \( D \) and \( d \) are treated as random variables, and the posterior distribution is given by Bayes' theorem:
\[
\mathbb{P}(D|d) \propto \mathbb{P}(d|D)\mathbb{P}(D),
\]
where \( \mathbb{P}(D) = \mathbb{P}(D_1) \otimes \mathbb{P}(D_2) \otimes \mathbb{P}(D_3) \) is the prior distribution before the data is observed, with each \( \mathbb{P}(D_i) \) assumed to be Gaussian: \( \mathbb{P}(D_1) = \mathcal{N}(m_1, C_1) \), \( \mathbb{P}(D_2) = \mathcal{N}(m_2, C_2) \), and \( \mathbb{P}(D_3) = \mathcal{N}(m_3, C_3) \). The likelihood function \( \mathbb{P}(d|D) \) is given by:
\[
\mathbb{P}(d|D) \propto \exp\left( -\frac{1}{2} \|C^{-\frac{1}{2}}(d - \mathbb{F}(D))\|^2 \right).
\]
Once the posterior distribution is obtained, the next step is to extract meaningful information, typically by sampling from the posterior density. Markov Chain Monte Carlo (MCMC) methods are commonly used for this purpose, but they are computationally expensive, especially for large-scale problems. Although MCMC offers an exhaustive exploration of the posterior distribution, its high computational cost can limit its practical use. In such cases, ensemble-based methods provide an efficient alternative, capturing the essential characteristics of the posterior distribution with fewer evaluations of the forward model.

In this study, we extend the Iterative Regularizing Ensemble Kalman Method (IREKM), as introduced by Iglesias et al. \cite{iglesias2015iterative} and Zhang et al. \cite{zhang2018bayesian}, to address the inverse problem of simultaneously determining the diffusion terms \( D = (D_1, D_2, D_3) \) in the time-fractional HIV infection model \eqref{eq1}.

Algorithm \ref{algo1} provides the pseudocode of the {\it IREKM}.

\vspace{0.2cm}
\begin{algorithm}[H]

\vspace{0.2cm}
\begin{enumerate}
\item[{\bf 1.}] Generate the initial ensemble $\left\{D_0^j\right\}_{j=1}^{N_e}$ from the prior distribution $\mathbb{P}(D)$ of $D=(D_1, D_2, D_3)$, where $N_e$ is the sample size. Let $d^j=d+\xi^j$, where $\xi^j \sim \mathcal{N}(0, C), j=1,2,3, \ldots, N_e$. Set $n=0.$
\item[{\bf 2.}] Let $D_n^j=(D_{1,n}^j, D_{2,n}^j, D_{3,n}^j)$, $j=1,2,\dots, N_e.$ Calculate
$z_{n}^j = \mathbb{F}(D_n^j),\quad \text{for}\quad j \in \{1, 2, \ldots, N_e\},$ and compute the ensemble mean $$\overline{z}_n = \frac{1}{N_e} \sum_{j=1}^{N_e}z_{n}^j.$$
Let 
    \begin{align*}
        C_{n}^{zz} &= \frac{1}{N_e - 1} \sum_{j=1}^{N_e} (\mathbb{F}(D_{n}^j) - \overline{z}_n)(\mathbb{F}(D_{n}^j) - \overline{z}_n)^T, 
         \\
         C_{n}^{D_i z} &= \frac{1}{N_e - 1} \sum_{j=1}^{N_e} (D_{i,n}^j - \overline{D}_{i,n})(\mathbb{F}(D_{n}^j) - \overline{z}_n)^T, \text{ for } i=1,2,3.
    \end{align*}
   where $$ \overline{D}_{i,n} = \frac{1}{N_e} \sum_{j=1}^{N_e} D_{i,n}.$$
\item[{\bf 3.}] Update each ensemble member
        \[D_{i,n+1}^j= D_{i,n}^j + C_{n}^{D_{i} z} (C_{n}^{zz} + \vartheta_n C)^{-1} (d^j - z_{n}^j), \quad j \in \{1, 2, \ldots, N_e\},\, i \in \{1,2,3\},\]
    where $\vartheta_n$ is chosen as follows:
Let $\vartheta_0$ be an initial guess, and $\vartheta_n^{i+1}=2^i \vartheta_0$. Choose $\vartheta_n=\vartheta_n^M$
where $M$ is the first integer such that
    \[ \vartheta_n^M \|C^{-\frac{1}{2}} (C_{n}^{zz} + \vartheta_n^M C)^{-1} (d - \overline{z}_n)\| \geq \mu \|C^{-1} (d - \overline{z}_n)\|, \]
    and $\mu \in (0, 1)$ is a constant.
\item[{\bf 4.}]  Increase $n$ by one and go Step 2, repeat the above procedure until a stopping criterion is satisfied.
\item[{\bf 5.}] Take $\overline{D}_n=(\overline{D}_{1,n}, \overline{D}_{2,n}, \overline{D}_{3,n})$ as the numerical solution of $(D_1, D_2, D_3)$.

\end{enumerate}
    \caption{\it Iterative Regularizing Ensemble Kalman Method ({\it IREKM})}
    \label{algo1}
\end{algorithm}
In Algorithm \ref{algo1}, we employ the Iterative Regularizing Ensemble Kalman Method (IREKM), a derivative-free optimization technique that effectively addresses the inverse problem of reconstructing the diffusion term in the HIV infection model. This method eliminates the need for computing adjoint problems, which are typically required in gradient-based optimization approaches. By bypassing the complexity of calculating gradients, particularly in cases where such computations are difficult or infeasible, IREKM provides a robust and efficient alternative for solving inverse problems in nonlinear models, such as those encountered in biological systems like HIV infection. Moreover, IREKM's ability to simultaneously recover multiple parameters without relying on traditional alternating iteration methods further enhances its suitability for reconstructing the diffusion term in this context. For a deeper understanding of the numerical theory behind IREKM, readers are referred to detailed discussions in works such as \cite{iglesias2015iterative, iglesias2013ensemble}. Schillings et al. \cite{schillings2018convergence} offers a convergence analysis for linear inverse problems, although the convergence of IREKM in nonlinear contexts remains an open research question, as highlighted by Iglesias et al. \cite{iglesias2015iterative}. This paper focuses on the practical implementation of IREKM for the reconstruction of the diffusion term within the HIV infection model \eqref{eq1}.

 \section{Numerical experiments}\label{num}
In this section, we present numerical experiments designed to validate the effectiveness of the proposed Bayesian inference framework for identifying the diffusion terms in the HIV infection model. These experiments demonstrate the capability of the Iterative Regularizing Ensemble Kalman Method (IREKM) to recover the parameters \( (D_1, D_2, D_3) \) from noisy observations, highlighting the robustness and accuracy. Through these simulations, we also examine the impact of different noise levels and prior information on the estimation process, providing valuable insights into the performance of the proposed approach.

\subsection{Numerical solutions of the forward problem}\label{sol-iter}
In this section, we aim to address the direct problem (\ref{eq1}) numerically. Specifically, we consider the direct problem (\ref{eq1}) with the inclusion of source functions to facilitate the numerical approximation while maintaining the essential characteristics of the original problem.
\begin{eqnarray}\label{general}
\left \{ \begin{array}{l}
\partial_t ^\alpha u_1 -  \div(D_1(u_1) \nabla u_1)  = - du_1 - \beta(u_1, u_3) - \sigma(u_1, u_2) + f_1(x,t), (x, t) \in \Omega_T,  \\
\partial_t ^\alpha u_2 -  \div(D_2(u_2) \nabla u_2)  = \beta(u_1, u_3) + \sigma(u_1, u_2) - ru_2 + f_2(x,t), \quad (x, t) \in \Omega_T,  \\
\partial_t ^\alpha u_3 -  \div(D_3(u_3) \nabla u_3)  = Nu_2 - eu_3 + f_3(x,t), \qquad \qquad \qquad \qquad (x, t) \in \Omega_T,  \\
u_1(x,t) = u_2(x, t) = u_3(x, t) = 0,      ~  ~  \qquad \qquad \qquad \qquad \qquad \qquad \qquad (x, t) \in \Gamma_T,\\
u_1(x,0) = u_{1,0}(x), u_2(x,0) = u_{2,0}(x), u_3(x,0) = u_{3,0}(x), \qquad \qquad \qquad   x \in \Omega. 
\end{array} \right.
\end{eqnarray}

In the following subsections, we outline our approach to numerically solving the coupled system. We start by describing the numerical techniques and methods used to approximate solutions to the system of equations. This includes an explanation of our discretization schemes and approximation strategies. In the second part, we perform rigorous tests and analyses to evaluate the accuracy and reliability of our numerical solutions. The validation process aims to confirm the robustness of our methodology in addressing the forward problem.
\subsubsection{Approximation method}
In this section, we explore the practical aspects of solving the forward problem within a numerical framework. Specifically, we employ the implicit-explicit finite difference scheme to approximate the behavior of the problem under consideration. Without loss of generality,  we restrict our analysis to the one-dimensional case, where the spatial domain $\Omega$ is defined as $[a, b]$. \\

For the temporal approximation, we adopt the approach outlined in \cite{lin2007finite} - \cite{salah5}. We  divide the time interval $[0, T]$ into $M$ subintervals using equidistant nodal points:
\[
t_0 = 0 < t_1 < \ldots < t_M = T,
\]
with $t_m = m h_t$ for $m=0, \ldots, M$, where $h_t = T/M$ is the step length. Based on Y. Lin and C. Xu \cite{lin2007finite}, we can approximate the Caputo fractional derivative in time using a simple quadrature formula as follows, for all $0 \leq m \leq M-1$:
\[
\begin{aligned}
\partial_t^\alpha \vartheta \left(x, t_{m+1}\right) & =\frac{1}{\Gamma(1-\alpha)} \sum_{j=0}^m \int_{t_j}^{t_{j+1}}  \frac{\partial_s \vartheta(x, s)}{\left(t_{m+1}-s\right)^\alpha}\,\mathrm{d} s \\
& =\frac{1}{\Gamma(1-\alpha)} \sum_{j=0}^m \frac{\vartheta \left(x, t_{j+1}\right)-\vartheta\left(x, t_j\right)}{h_t} \int_{t_j}^{t_{j+1}} \frac{\mathrm{d} s}{\left(t_{m+1}-s\right)^\alpha}+r_{h_t}^{m+1}\\
& =\frac{h_t^{-\alpha}}{\Gamma(2-\alpha)} \sum_{j=0}^m\big[ \vartheta \left(x, t_{j+1}\right)-\vartheta\left(x, t_j\right)\big]\, \big[ \left(m+1-j\right)^{1-\alpha}-\left(m-j\right)^{1-\alpha}\big]+r_{h_t}^{m+1}\\
& =\frac{h_t^{-\alpha}}{\Gamma(2-\alpha)} \sum_{j=0}^m\big[ \vartheta \left(x, t_{m+1-j}\right)-\vartheta\left(x, t_{m-j}\right)\big]\, \big[ \left(j+1\right)^{1-\alpha}-j^{1-\alpha}\big]+r_{h_t}^{m+1},
\end{aligned}
\]
where $r_{h_t}^{m+1}$ is the truncation error and satisfies $r_{h_t}^{m+1}\leq c\, h_t^{2-\alpha}$.

For the sake of simplification, let us introduce the notations $c_\alpha:=h_t^\alpha\, \Gamma(2-\alpha)$ and $b_j^\alpha:=\left(j+1\right)^{1-\alpha}-j^{1-\alpha}$, for $j=0,1,\dots,m$. We define the discrete fractional differential operator $\widetilde{\partial_t^\alpha}$ as:
\[
\widetilde{\partial_t^\alpha} \vartheta \left(x, t_{m+1}\right):=\frac{1}{c_\alpha} \sum_{j=0}^mb_j^\alpha\, \big[ \vartheta \left(x, t_{m+1-j}\right)-\vartheta\left(x, t_{m-j}\right)\big].
\]
It is straightforward to observe that $\widetilde{\partial_t^\alpha} \vartheta \left(x, t_{m+1}\right)$ can be rewritten as follows:
\[
\widetilde{\partial_t^\alpha} \vartheta \left(x, t_{m+1}\right):=\frac{1}{c_\alpha}\vartheta \left(x, t_{m+1}\right)-\frac{1}{c_\alpha}\Phi^m(\vartheta),
\]
where the term $\Phi^m(\vartheta)$ is defined as follows: 
\begin{equation*}
\Phi^m(\vartheta):=\left\{\begin{array}{ll}
\vartheta\left(x, t_{m}\right), & \text { if } m=0 \\
\vartheta\left(x, t_{m}\right)-\sum_{j=1}^mb_j^\alpha\, \big[ \vartheta \left(x, t_{m+1-j}\right)-\vartheta\left(x, t_{m-j}\right)\big], & \text { if } 1\leq m \leq M-1. 
\end{array}\right.
\end{equation*}
Hereafter, we denote by $\vartheta^m$ the approximated
solution at the time $t_m$, for all $m=0,\dots,M$, i.e. $\vartheta^m=\vartheta(x,t_m)$ for all $x \in \Omega$. Under these considerations, the time discretization of system \eqref{general} reads as follows:
\begin{equation}\label{approw-time}
\left\{\begin{array}{ll}
\widetilde{\partial_t^\alpha} u_1^{m+1}-\operatorname{\div}\left(D_1(u_1^{m}) \nabla u_1^{m+1}\right)=-d^{m+1} u_1^{m+1}-\beta(u_1^{m}, u_3^{m})-\sigma(u_1^{m}, u_2^{m})+f_{1}^{m+1}, & \text { in } \Omega \\
\widetilde{\partial_t^\alpha} u_2^{m+1}-\operatorname{\div}\left(D_2(u_2^{m}) \nabla u_2^{m+1}\right)=\beta(u_1^{m}, u_3^{m})+\sigma(u_1^{m}, u_2^{m})-r^{m+1} u_2^{m+1}+f_{2}^{m+1}, & \text { in } \Omega \\
\widetilde{\partial_t^\alpha} u_3^{m+1}-\operatorname{\div}\left(D_3(u_3^{m}) \nabla u_3^{m+1}\right) =N^{m+1} u_2^{m}-e^{m+1} u_3^{m+1}+f_{3}^{m+1}, & \text { in } \Omega.
\end{array}\right.
\end{equation}
for all $0\leq m \leq M-1$, with 
$$
\begin{aligned}
& u_1^0=u_{1,0}(x);\, u_2^0=u_{2,0}(x);\, u_3^0=u_{3,0}(x) \text { in } \Omega \\
& u_1^{m+1}=u_2^{m+1}=u_3^{m+1}=0 \text { on } \partial \Omega,\; \text { for all } 0\leq m \leq M-1.
\end{aligned}
$$

\noindent Similarly, for discretizing the spatial domain, we introduce a set of evenly spaced grid points $\left(x_i\right){0 \leq i \leq N}$ defined as $x_i=a+i h_x$, where $N$ is an integer, and the spacing $h_x$ is determined by $h_x=(b-a)/N$. At the boundaries of the domain $\Omega=[a,b]$, we have $x_0=a$ and $x_{N}=b$. For each pair $(x_i,t_m)$, with $0\leq m\leq M$ and $0\leq i\leq N$, we seek the numerical value of the solution, denoted as $\vartheta\left(x_i,t_m\right)$. We enforce boundary and initial conditions such that $\vartheta\left(x_0,t_m\right)=\vartheta\left(x_N,t_m\right)=0$ for all $0\leq m\leq M$ and $\vartheta\left(x_i,t_0\right)=0$ for all $0\leq i\leq N$. To approximate the spatial derivatives, we commonly employ central difference formulas due to their ability to provide enhanced accuracy. The differential equation is applied exclusively at the grid points, and the approximations for the first and second derivatives are as follows:
$$
\begin{gathered}
\partial_x \vartheta(x_i,t_m)\approx \frac{\vartheta(x_{i+1},t_m)-\vartheta(x_{i-1},t_m)}{2 h_x}, \\
\partial_x^2 \vartheta(x_i,t_m)\approx\frac{\vartheta(x_{i+1},t_m)-2\vartheta(x_{i},t_m)+\vartheta(x_{i-1},t_m)}{h_x^2}
\end{gathered}
$$
From now on, we denote by \( u_{k,i}^m \) the discrete mesh function approximating \( u_k(x_i,t_m) \) for \( k=1,2,3 \), \( i=0,\dots,N \), and \( m=0,\dots,M \). With this notation in place, we introduce a fully discrete scheme for the problem \eqref{general}:
\begin{itemize}
    \item Given
    $$u_{1,i}^0=u_{1,0}(x_i),\, \; u_{2,i}^0=u_{2,0}(x_i),\,\;  u_{3,i}^0=u_{3,0}(x_i), \; \; \text{ for all } 0 \leq i \leq N,$$
    and 
$$u_{1,i}^{m+1}=u_{2,i}^{m+1}=u_{3,i}^{m+1}=0 \text { for all } 0 \leq m \leq M-1\; \text { and } i \in \{0, N\}.$$
    \item Find the vector $\Lambda^{m+1}$, for all $0 \leq m \leq M-1$, which is defined as:
$$\Lambda^{m+1}:=\left(u_{1,1}^{m+1},\dots,u_{1,N-1}^{m+1}, u_{2,1}^{m+1},\dots,u_{2,N-1}^{m+1}, u_{3,1}^{m+1},\dots,u_{3,N-1}^{m+1} \right)^T$$ such that it satisfies the following linear system of equations
\begin{equation}\label{approximated}\small
\left\{\begin{array}{l}
\frac{c_\alpha}{h_x^2}({}_1\mathbb{D}_{i}^m-\frac{{}_1\mathbb{D}_{i+1}^m-{}_1\mathbb{D}_{i-1}^m}{4})u_{1,i-1}^{m+1}+(1+\frac{2{}_1\mathbb{D}_{i}^mc_\alpha}{h_x^2}+c_\alpha d_i^{m+1})u_{1,i}^{m+1}-\frac{c_\alpha}{h_x^2}({}_1\mathbb{D}_{i}^m+\frac{{}_1\mathbb{D}_{i+1}^m-{}_1\mathbb{D}_{i-1}^m}{4})u_{1,i+1}^{m+1}={}_1\mathcal{F}_i^m,  \\
\\
\frac{c_\alpha}{h_x^2}({}_2\mathbb{D}_{i}^m-\frac{{}_2\mathbb{D}_{i+1}^m-{}_2\mathbb{D}_{i-1}^m}{4})u_{2,i-1}^{m+1}+(1+\frac{2{}_2\mathbb{D}_{i}^mc_\alpha}{h_x^2}+c_\alpha r_i^{m+1})u_{2,i}^{m+1}-\frac{c_\alpha}{h_x^2}({}_2\mathbb{D}_{i}^m+\frac{{}_2\mathbb{D}_{i+1}^m-{}_2\mathbb{D}_{i-1}^m}{4})u_{2,i+1}^{m+1}={}_2\mathcal{F}_i^m,  \\
 \\
\frac{c_\alpha}{h_x^2}({}_3\mathbb{D}_{i}^m-\frac{{}_3\mathbb{D}_{i+1}^m-{}_3\mathbb{D}_{i-1}^m}{4})u_{3,i-1}^{m+1}+(1+\frac{2{}_3\mathbb{D}_{i}^mc_\alpha}{h_x^2}+c_\alpha e_i^{m+1})u_{3,i}^{m+1}-\frac{c_\alpha}{h_x^2}({}_3\mathbb{D}_{i}^m+\frac{{}_3\mathbb{D}_{i+1}^m-{}_3\mathbb{D}_{i-1}^m}{4})u_{3,i+1}^{m+1}={}_3\mathcal{F}_i^m, 
\end{array}\right.
\end{equation}
where
\begin{equation}\label{terms}
\left\{\begin{array}{l}
{}_1\mathbb{D}(x,t)=D_1(u_1(x,t)) ;\, {}_2\mathbb{D}(x,t)=D_2(u_2(x,t)); \, {}_3\mathbb{D}(x,t)=D_3(u_3(x,t)), \, \, \text{ in }  \Omega_T,  \\
\\
{}_1\mathcal{F}^m:=\Phi^m(u_1)-c_\alpha \beta(u_1^{m}, u_3^{m})-c_\alpha \sigma(u_1^{m}, u_2^{m})+c_\alpha f_1^{m+1}, \, \, \text{ in }  \Omega, \\
 \\
{}_2\mathcal{F}^m:=\Phi^m(u_2)+c_\alpha \beta(u_1^{m}, u_3^{m})+c_\alpha \sigma(u_1^{m}, u_2^{m})+c_\alpha f_2^{m+1}, \, \, \text{ in }  \Omega,\\
\\
{}_3\mathcal{F}^m:=\Phi^m(u_3)+c_\alpha N^{m+1}u_2^m+c_\alpha f_3^{m+1}, \, \, \text{ in }  \Omega.
\end{array}\right.
\end{equation}
\end{itemize}
Henceforth, it can be observed that this problem can be transformed into a matrix-based formulation to streamline its resolution. Specifically, we seek solutions to the system of equations:

\[
\mathcal{A}\, \Lambda^{m+1} = \mathcal{F},\, \, \text{ for all } 0\leq m \leq M-1,
\]

where the matrix $\mathcal{A}$ and the right-hand side $\mathcal{F}$ are defined as follows:

\[
\mathcal{A}=\left[\begin{array}{lll}
U & 0 & 0 \\
0 & V & 0 \\
0 & 0 & W
\end{array}\right]\; \; \text{ and }\; \; \mathcal{F}=\left[\begin{array}{c}
^{}_1F \\
^{}_2F  \\
^{}_3F
\end{array}\right].
\]
Within this context, the tridiagonal matrices 
\[
U \in \mathcal{M}_{N-1}(\mathbb{R}),\; V \in \mathcal{M}_{N-1}(\mathbb{R}),\; \text{ and } W \in \mathcal{M}_{N-1}(\mathbb{R})
\]
are defined for all $1\leq p \leq N-1$ as follows:
$$U_{p-1,p}=\frac{c_\alpha}{h_x^2}\bigg({}_1\mathbb{D}_{p}^m-\frac{{}_1\mathbb{D}_{p+1}^m-{}_1\mathbb{D}_{p-1}^m}{4}\bigg),\; U_{p,p}=1+\frac{2{}_1\mathbb{D}_{i}^mc_\alpha}{h_x^2}+c_\alpha d_i^{m+1},\; \text{ and } \, U_{p,p+1}=\frac{c_\alpha}{h_x^2}\bigg({}_1\mathbb{D}_{p}^m+\frac{{}_1\mathbb{D}_{p+1}^m-{}_1\mathbb{D}_{p-1}^m}{4}\bigg),$$
$$V_{p-1,p}=\frac{c_\alpha}{h_x^2}\bigg({}_2\mathbb{D}_{p}^m-\frac{{}_2\mathbb{D}_{p+1}^m-{}_2\mathbb{D}_{p-1}^m}{4}\bigg),\; V_{p,p}=1+\frac{2{}_2\mathbb{D}_{p}^mc_\alpha}{h_x^2}+c_\alpha r_p^{m+1},\; \text{ and } \, V_{p,p+1}=\frac{c_\alpha}{h_x^2}\bigg({}_2\mathbb{D}_{p}^m+\frac{{}_2\mathbb{D}_{p+1}^m-{}_2\mathbb{D}_{p-1}^m}{4}\bigg),$$
$$W_{p-1,p}=\frac{c_\alpha}{h_x^2}\bigg({}_3\mathbb{D}_{p}^m-\frac{{}_3\mathbb{D}_{p+1}^m-{}_3\mathbb{D}_{p-1}^m}{4}\bigg),\; W_{p,p}=1+\frac{2{}_3\mathbb{D}_{i}^mc_\alpha}{h_x^2}+c_\alpha e_i^{m+1},\; \text{ and } \, W_{p,p+1}=\frac{c_\alpha}{h_x^2}\bigg({}_3\mathbb{D}_{p}^m+\frac{{}_3\mathbb{D}_{p+1}^m-{}_3\mathbb{D}_{p-1}^m}{4}\bigg),$$
respectively. Furthermore, the vectors 
\[
^{}_1F \in \mathbb{R}^{N-1},\; ^{}_2F \in \mathbb{R}^{N-1},\; \text{ and }\, ^{}_3F \in \mathbb{R}^{N-1}
\]
are defined for all $1\leq p \leq N-1$ as follows:
\[
^{}_1F_p=^{}_1\mathcal{F}_p^m,\; ^{}_2F_p=^{}_2\mathcal{F}_p^m,\; \text{ and }\, ^{}_3F_p=^{}_3\mathcal{F}_p^m.
\]
To this end, the unknown term $\Lambda^{m+1}$, for $0\leq m \leq M-1$, can be obtained as follows: 
$$\Lambda^{m+1}=\mathcal{A}^{-1}\mathcal{F}.$$
\subsubsection{Validation test}
In this section, we conduct a validation test to assess the performance and accuracy of our numerical method. The objective is to compare the results obtained from our numerical approach with the exact solutions of the continuous model. To do so, we take the  computational domain as $\Omega=[0,1]$ and set the final time as $T=1$. We consider three exact solutions as follows:
$$u_{1}^\dagger(x,t)=t^\alpha \sin(\pi x),\, \, u_{2}^\dagger(x,t)=t x^2 (x-1), \, \text{ and }\, u_{3}^\dagger(x,t)=t^{2\alpha} x (1-x)\, \,  \text{ for all } (x,t) \in \Omega_T.$$
It is evident that the exact solutions satisfy the following initial and boundary conditions:
$$
\begin{aligned}
&u_{1}^\dagger(x, 0)= u_{2}^\dagger(x, 0)= u_{3}^\dagger(x, 0)=0 \text { in } \Omega \\
& u_{1}^\dagger(x, t)=u_{2}^\dagger(x, t)=u_{3}^\dagger(x, t)=0 \text { on } \Gamma_T.
\end{aligned}
$$
The diffusion terms $D_1$, $D_2$, and $D_3$ are chosen as follows:
$$D_1(u_1)=1+u_1,\; \; D_2(u_2)=(1+u_2)^2,\; \text{ and }\; D_3(u_3)=1+u_3^2.$$
The functions $d$, $\beta_1$, $\beta_2$, $r$, $N$, and $e$ are given by:
$$d(x,t)=1+tx,\; \; \beta_1(x,t)=0.1,\; \; \beta_2(x,t)=0.01,\; \; r(x,t)=0.3,\; \; e(x,t)=0.001,\; \text{ and }\, N(x,t)=e^{xt}.$$
The source terms $f_1$, $f_2$, and $f_3$ can be computed by substituting this exact solutions into the system \eqref{general}. To assess the accuracy of the proposed approximation method, we define the following error function:  
\[
\operatorname{\bf Err}_k(h_t,h_x) := \| u_{k}^\dagger - \tilde{u}_{k} \|_{L^\infty(0,T; L_2(\Omega))}, \quad \text{for } k=1,2,3.
\]  
Here, \( \tilde{u}_{k} \) represents the approximated solution corresponding to the exact solution \( u_{k}^\dagger \). The spatial and temporal convergence orders are computed by 
$$
\operatorname{\bf Ord}_x=\log _2 \frac{\operatorname{\bf Err}_k(h_t, h_x)}{\operatorname{\bf Err}_k(h_t, h_x/ 2)},\; \; \;\operatorname{\bf Ord}_t=\log _2 \frac{\operatorname{\bf Err}_k(h_t, h_x)}{\operatorname{\bf Err}_k(h_t/2, h_x)}.
$$
In Table \ref{order_x}, we present the calculated error values in relation to the variation in the mesh size $h_x$, along with the corresponding convergence order $\operatorname{\bf Ord}_x$. This analysis is carried out for two values of $\alpha$, namely $\alpha=0.3$ and $\alpha=0.7$ while keeping the step time size $h_t$ fixed at $h_t=0.001$.
\begin{table}[H]
    \centering
    \begin{tabular}{ c|| c c c c c c c  } 
\hline
 Derivative order $\alpha$& $h_x$ & $\operatorname{\bf Err}_1$ & $\operatorname{\bf Ord}_x$ & $\operatorname{\bf Err}_2$& $\operatorname{\bf Ord}_x$ & $\operatorname{\bf Err}_3$ & $\operatorname{\bf Ord}_x$ \\
\hline
\multirow{5}{4em}{$\alpha=0.3$} &$0.1$ & $1.97\text{\bf e-}2$& {\bf \Large-} & $4.49\text{\bf e-}3$& {\bf \Large-} & $6.53\text{\bf e-}4$ & {\bf \Large-} \\ 
&$0.05$ & $5.11\text{\bf e-}3$& $1.9468$ & $1.02\text{\bf e-}3$& $2.1381$ & $1.65\text{\bf e-}4$ & $1.9846$ \\ 
&$0.025$ & $1.21\text{\bf e-}3$& $2.0783$ & $2.61\text{\bf e-}4$& $1.9664$ & $4.23\text{\bf e-}5$ & $1.9637$ \\ 
&$0.0125$ & $3.33\text{\bf e-}4$& $1.8614$ & $6.72\text{\bf e-}5$& $1.9575$ & $1.04\text{\bf e-}5$ & $2.0240$ \\
& $0.00625$ & $7.45\text{\bf e-}5$ & $2.1602$ & $1.69\text{\bf e-}5$& $1.9914$ & $2.74\text{\bf e-}6$&$1.9243$ \\
\hline
\multirow{5}{4em}{$\alpha=0.7$} &$0.1$ & $2.21\text{\bf e-}2$& {\bf \Large-} & $4.77\text{\bf e-}3$& {\bf \Large-} & $6.58\text{\bf e-}4$ & {\bf \Large-} \\ 
&$0.05$ & $5.31\text{\bf e-}3$& $2.0572$ & $1.12\text{\bf e-}3$& $2.0904$ & $1.71\text{\bf e-}4$ & $1.9440$ \\ 
&$0.025$ & $1.38\text{\bf e-}3$& $1.9440$ & $2.70\text{\bf e-}4$& $2.0524$ & $4.29\text{\bf e-}5$ & $1.9949$\\ 
&$0.0125$ & $3.39\text{\bf e-}4$& $2.0295$ & $6.80\text{\bf e-}5$& $1.9893$ & $1.11\text{\bf e-}5$ & $1.9504$ \\
& $0.00625$ & $7.52\text{\bf e-}5$ & $2.1724$ & $1.81\text{\bf e-}5$& $1.9095$ & $2.64\text{\bf e-}6$&$2.0719$  \\
\hline
\end{tabular}
    \caption{Convergence analysis for different mesh sizes $h_x$ and two values of $\alpha$ ($\alpha=0.3$ and $\alpha=0.7$) with a fixed step time size $h_t=0.001$.}
    \label{order_x}
\end{table}


In Table \ref{order_t}, we present the convergence analysis with respect to the step time size $h_t$ for $\alpha=0.3$ and $\alpha=0.7$ while keeping the mesh size fixed at $h_x=0.001$. The table provides the calculated error values and the corresponding convergence orders $\operatorname{\bf Ord}_t$ for different  time step sizes.

\begin{table}[H]
    \centering
    \begin{tabular}{ c|| c c c c c c c  } 
\hline
  Derivative order $\alpha$& $h_t$ & $\mathrm{\bf Err}_1$ & $\operatorname{\bf Ord}_t$ & $\operatorname{\bf Err}_2$& $\operatorname{\bf Ord}_t$ & $\operatorname{\bf Err}_3$ & $\operatorname{\bf Ord}_t$ \\
\hline
\multirow{5}{4em}{$\alpha=0.3$} &$0.1$ & $5.16\text{\bf e-}2$& {\bf \Large-} & $1.38\text{\bf e-}3$& {\bf \Large-} & $1.90\text{\bf e-}3$ & {\bf \Large-} \\ 
&$0.05$ & $2.73\text{\bf e-}2$& $0.9184$ & $7.15\text{\bf e-}4$& $0.9486$ & $9.75\text{\bf e-}4$ & $0.9625$ \\ 
&$0.025$ & $1.42\text{\bf e-}2$& $0.9430$ & $3.68\text{\bf e-}4$& $0.9582$ & $5.04\text{\bf e-}4$ & $0.9519$ \\ 
&$0.0125$ & $6.26\text{\bf e-}3$& $1.1816$ & $1.75\text{\bf e-}4$& $1.0723$ & $2.63\text{\bf e-}4$ & $0.9383$ \\
& $0.00625$ & $3.16\text{\bf e-}3$ & $0.9862$ & $8.11\text{\bf e-}5$& $1.1095$ & $1.27\text{\bf e-}4$&$1.0502$ \\
\hline
\multirow{5}{4em}{$\alpha=0.7$} &$0.1$ & $1.05\text{\bf e-}2$& {\bf \Large-} & $1.48\text{\bf e-}3$& {\bf \Large-} & $2.41\text{\bf e-}3$ & {\bf \Large-} \\ 
&$0.05$ & $5.24\text{\bf e-}3$& $1.0027$ & $7.47\text{\bf e-}4$& $0.9864$ & $1.21\text{\bf e-}3$ & $0.9940$ \\ 
&$0.025$ & $2.32\text{\bf e-}3$& $1.1754$ & $3.66\text{\bf e-}4$& $1.0331$ & $6.16\text{\bf e-}4$ & $0.9740$\\ 
&$0.0125$ & $1.12\text{\bf e-}3$& $1.0506$ & $1.76\text{\bf e-}4$& $1.0562$ & $3.18\text{\bf e-}4$ & $0.9539$ \\
& $0.00625$ & $5.72\text{\bf e-}4$ & $0.9694$ & $8.66\text{\bf e-}5$& $1.0231$ & $1.49\text{\bf e-}4$&$1.0937$  \\
\hline
\end{tabular}
    \caption{Convergence analysis for different step time sizes $h_t$ and two values of $\alpha$ ($\alpha=0.3$ and $\alpha=0.7$) with a fixed mesh size $h_x=0.001$.}
    \label{order_t}
\end{table}

   
To this end, we illustrate in Figure \ref{test_direct} the variation of the exact and approximated solutions for $\alpha=0.5$ with $h_x=h_t=0.001.$
\begin{figure}[H]
     \centering
     \begin{subfigure}[b]{0.3\textwidth}
         \centering
         \includegraphics[width=\textwidth]{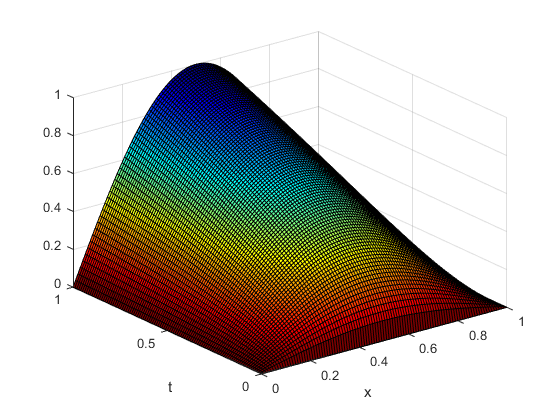}
         \caption {Exact solution $u_{1}^\dagger$}
         \label{fig:y equals x}
     \end{subfigure}
     \hfill
     \begin{subfigure}[b]{0.3\textwidth}
         \centering
         \includegraphics[width=\textwidth]{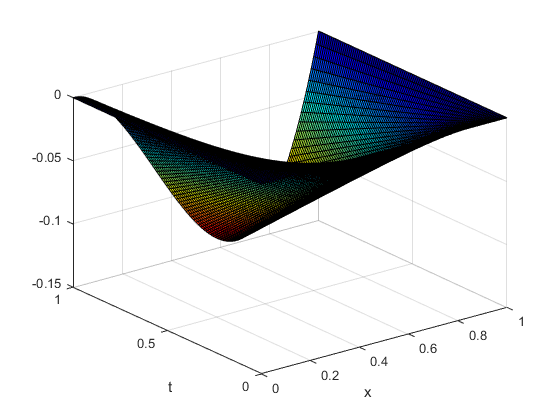}
         \caption{Exact solution $u_{2}^\dagger$}
         \label{fig:three sin x}
     \end{subfigure}
     \hfill
     \begin{subfigure}[b]{0.3\textwidth}
         \centering
         \includegraphics[width=\textwidth]{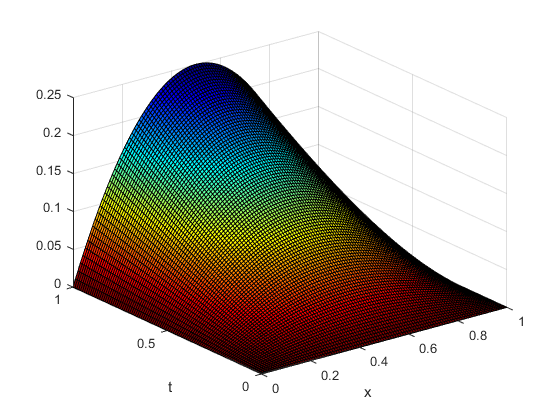}
         \caption{Exact solution $u_{3}^\dagger$}
         \label{fig:five over x}
     \end{subfigure}\\
     \begin{subfigure}[b]{0.3\textwidth}
         \centering
         \includegraphics[width=\textwidth]{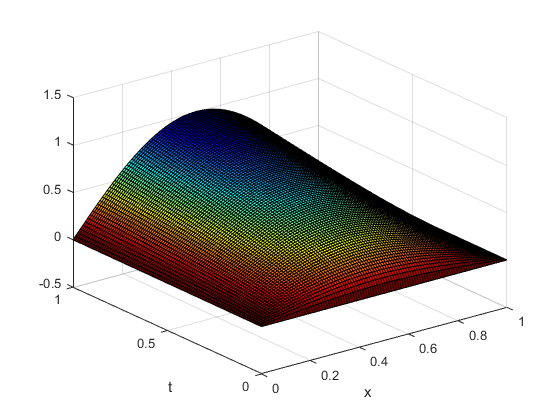}
         \caption{Approximated solution $\tilde{u}_1$}
         \label{fig:y equals x}
     \end{subfigure}
     \hfill
     \begin{subfigure}[b]{0.3\textwidth}
         \centering
         \includegraphics[width=\textwidth]{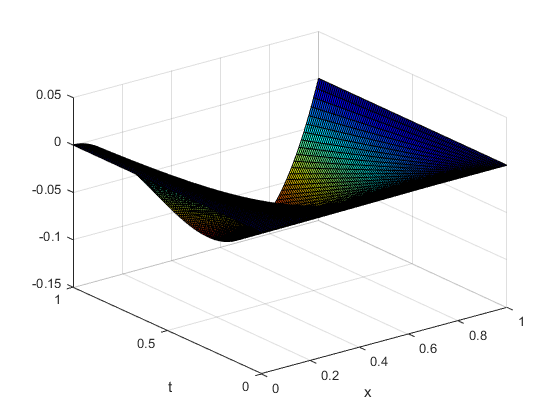}
         \caption{Approximated solution $\tilde{u}_2$}
         \label{fig:three sin x}
     \end{subfigure}
     \hfill
     \begin{subfigure}[b]{0.3\textwidth}
         \centering
         \includegraphics[width=\textwidth]{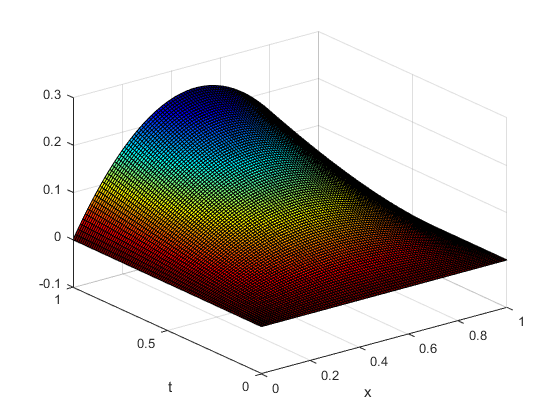}
         \caption{Approximated solution $\tilde{u}_3$}
         \label{fig:five over x}
     \end{subfigure}
        \caption{Comparison of Exact and Approximated Solutions for $\alpha=0.5$ with $h_x=h_t=0.001$. The top row depicts the exact solutions, while the bottom row illustrates the approximated solutions. }
        \label{test_direct}
\end{figure}
\subsection{Numerical solutions of the inverse problem}
In this section, we present numerical results for three test cases to demonstrate the effectiveness of the Iterative Regularizing Ensemble Kalman Method (IREKM). Without loss of generality, we consider the computational domain \(\Omega = (0,1)\) and the time horizon \(T=1\). The direct problem \eqref{eq1} is solved using the proposed method described in the previous subsection. For the finite element algorithm, we discretize both the time and space domains with grid sizes \( h_t = h_x = \frac{1}{100} \).

The source terms for the test cases are given by:
\begin{equation*}
 f_1(x,t) =  f_2(x,t) = f_3(x,t) = 0.   
\end{equation*}
The initial conditions for the variables are:
$$ u_{1,0}(x) = \sin(\pi x), \quad u_{2,0}(x) = x - x^2, \quad u_{3,0}(x) = \sin(2\pi x). $$
To assess the accuracy of the numerical solution, we compute the approximate relative error, defined as  
\[
E_n = \frac{\left\|\overline{D}_n-D^{\dagger}\right\|_2}{\left\|D^{\dagger}\right\|_2},
\]
where \(\overline{D}_n = \left(\overline{D}_{1,n}, \overline{D}_{2,n}, \overline{D}_{3,n}\right)\) represents the mean values at the \(n\)th iteration of IREKM, and \(D^{\dagger} = \left(D_1^{\dagger}, D_2^{\dagger}, D_3^{\dagger}\right)\) is the true solution of the inverse problem. The norm \(\|\cdot\|_2\) denotes the Euclidean norm. Additionally, we monitor the residual \(R_n\) at each iteration, given by  
$$
R_n = \left\|C^{-\frac{1}{2}}\left(d-\bar{z}_n\right)\right\|_2.
$$  
A crucial aspect of iterative algorithms is selecting an appropriate stopping criterion. Here, we employ the stopping rule proposed in \cite{iglesias2015iterative,iglesias2016regularizing,zhang2018bayesian}, selecting \( n_{\star} \) such that  
$$
R_{n_{\star}} \leq \tau \delta,
$$  
where \(\tau > 0\) is a constant, and \(\delta = \left\|C^{-\frac{1}{2}}\left(d-\mathbb{F}\left(D^{\dagger}\right)\right)\right\|_2\) represents the noise level in the observed data.

In our numerical experiments, the prior distributions for \( P(D_1) \), \( P(D_2) \), and \( P(D_3) \) are chosen as  
$$
C_i = K_i A^{-s_i}, \quad i=1,2,3,
$$  
where \( A \) is the Laplace operator with homogeneous Dirichlet boundary conditions, and \( s_i > \frac{1}{2} \) determines the regularity of the Gaussian prior. The initial mean values \(\bar{D}_{i,0}\) for \( i=1,2,3 \) are taken as horizontal lines, with each line depending on the numerical example’s endpoint values. This choice is motivated by the fact that Bayesian inverse problems are highly sensitive to prior information selection. The key parameters used in the experiments are summarized in Table \ref{key}. 
\begin{table}[H]
    \centering
    \begin{tabular}{|c|c|c|c|c|c|c|c|c|c|c|}
\hline
\textbf{Parameter} & $N_e$ & $\vartheta_0$ & $\nu$ & $\tau$ &  $s_1$ & $s_2$ & $s_3$ & $K_1$ & $K_2$ & $K_3$ \\
\hline
\textbf{Value} & 300 & 0.1 & 0.7 & 1.1  & 0.75 & 2 & 2 & 100 & 100 & 100 \\
\hline
    \end{tabular}
    \caption{Numerical values of the parameters used in the reconstruction process.}
    \label{key}
\end{table}

\subsubsection{Reconstruction results}
In the following, we present three test scenarios to validate the proposed reconstruction method for the diffusion terms in an HIV infection model. These scenarios feature different forms of diffusion for infected, healthy, and immune cells, each chosen to reflect various biological processes. The observed data for each test case are synthetic, generated to simulate realistic infection dynamics. It should be noted that all computations are performed using {\it MATLAB} version {\it R2017a}. 
\paragraph{Example 1.} In this test, we apply the proposed algorithm to reconstruct the diffusion terms \(D_1\), \(D_2\), and \(D_3\) in an HIV infection model, where the selection of these coefficients is biologically motivated by the distinct behaviors of infected, healthy, and immune cells. The diffusion of infected cells follows \(D_1(u_1) = 1 + 0.5 u_1\), indicating a linear dependence on their concentration.This means their spread increases proportionally as more cells become infected, representing a moderate yet steady propagation of infection. Healthy cells diffuse according to \(D_2(u_2) = 1 + 0.3 u_2^2\), where the quadratic dependence suggests an adaptive response to tissue damage, allowing faster regeneration as the healthy cell population grows. Immune cells follow \(D_3(u_3) = 1 + 0.2 u_3^3\), with a cubic dependence that captures an aggressive recruitment and activation mechanism, enhancing the immune response as more immune cells accumulate. These choices ensure that infected cells spread at a controlled rate, healthy cells migrate more efficiently as part of tissue repair, and immune cells exhibit a highly dynamic response to infection. This test case validates the reconstruction by employing the aforementioned diffusion terms, providing insight into how accurately they can be determined from observed data and assessing the spatial progression of infection and immune response.\\

The reconstruction results for \(D_1(\cdot)\), \(D_2(\cdot)\), and \(D_3(\cdot)\) in Example 1 are illustrated in Figure \ref{recon-test1} for the parameter value \(\alpha = 0.7\). These results provide insight into the accuracy and effectiveness of the proposed method in recovering the diffusion coefficients, highlighting how well the reconstructed profiles align with the expected behavior.

\begin{figure}[H]
    \centering
    \begin{subfigure}[b]{0.3\textwidth}
        \centering
        \includegraphics[width=\textwidth]{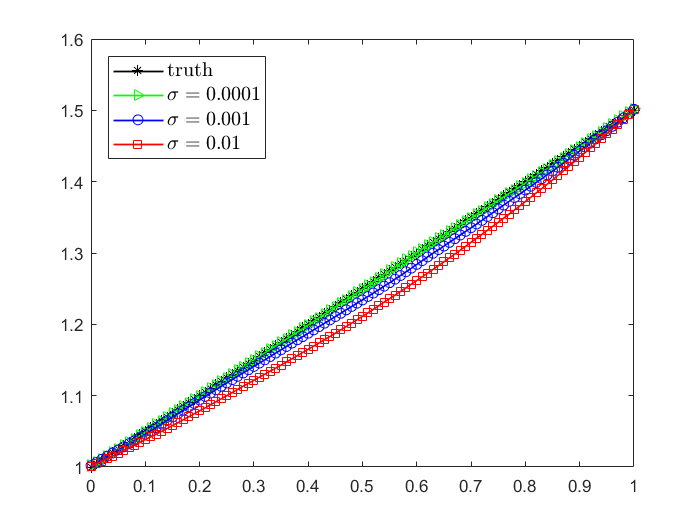}
        \caption{}
    \end{subfigure}
    \hfill
    \begin{subfigure}[b]{0.3\textwidth}
        \centering

        \includegraphics[width=\textwidth]{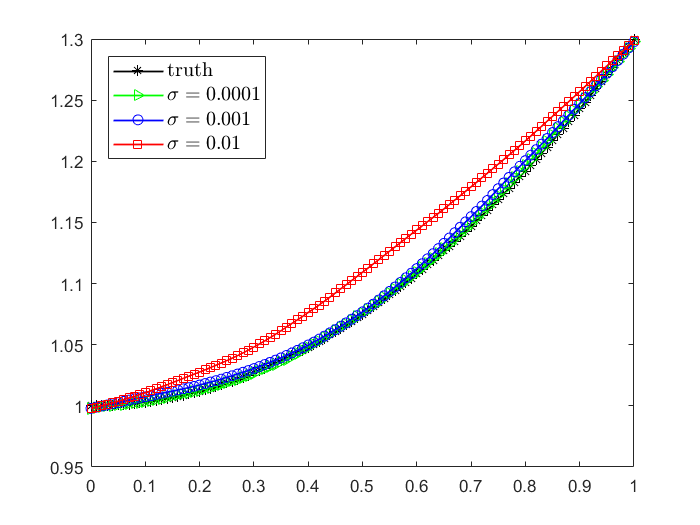}
        \caption{}
    \end{subfigure}
    \hfill
    \begin{subfigure}[b]{0.3\textwidth}
        \centering
        \includegraphics[width=\textwidth]{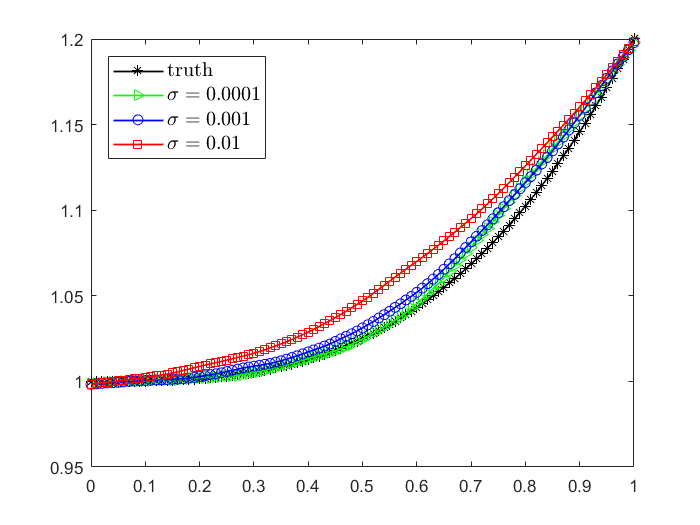}
        \caption{}
    \end{subfigure}
    \caption{The numerical results of $D_1(\cdot)$ (left), $D_2(\cdot)$ (middle) and $D_3(\cdot)$ (right) for different noises $\sigma$ with $\alpha=0.7$ in Example 1.}
    \label{recon-test1}
\end{figure}

\paragraph{Example 2.} Our objective is to reconstruct the diffusion coefficients \(D_1\), \(D_2\), and \(D_3\) in an HIV infection model, where their selection is guided by biological reasoning. The diffusion of infected cells follows \(D_1(u_1) = \frac{1}{1 + u_1^2}\), reflecting a saturation effect that restricts their movement at higher concentrations due to physical barriers or immune responses. Healthy cells diffuse according to \(D_2(u_2) = \frac{1}{1 + u_2^4}\), modeling a scenario in which their mobility decreases as cell density increases, representing spatial competition during tissue repair. Immune cells, governed by \(D_3(u_3) = 1 + \frac{u_3}{1 + u_3^2}\), exhibit a regulated diffusion pattern where their spread initially increases but slows at high concentrations, mimicking immune system limitations such as overcrowding or exhaustion. These diffusion models capture biologically relevant constraints, ensuring a more realistic representation of infection dynamics. This test case evaluates the feasibility of reconstructing these coefficients from the final time value observed data and provides insight into the interplay between infection spread, tissue recovery, and immune response.\\

Figure \ref{recon-test2} shows the reconstruction results for \(D_1(\cdot)\), \(D_2(\cdot)\), and \(D_3(\cdot)\) in Example 2 with the parameter value \(\alpha = 0.5\). These results highlight the method's effectiveness in accurately recovering the diffusion coefficients, with the reconstructed profiles closely matching expected behavior.

\begin{figure}[H]
    \centering
    \begin{subfigure}[b]{0.3\textwidth}
        \centering
        \includegraphics[width=\textwidth]{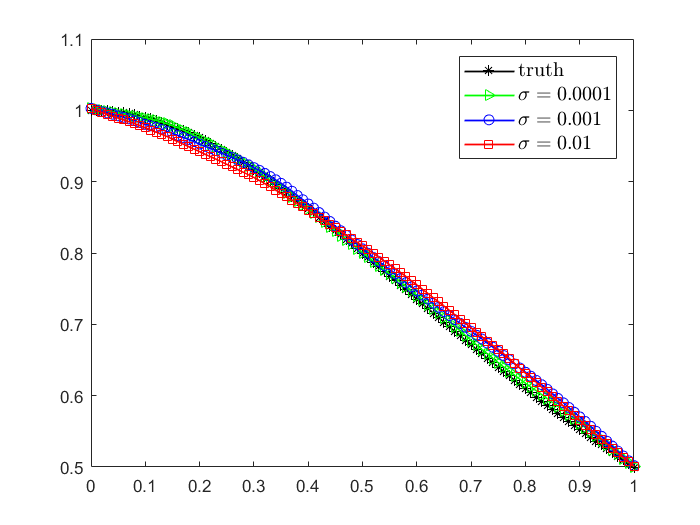}
        \caption{}
    \end{subfigure}
    \hfill
    \begin{subfigure}[b]{0.3\textwidth}
        \centering

        \includegraphics[width=\textwidth]{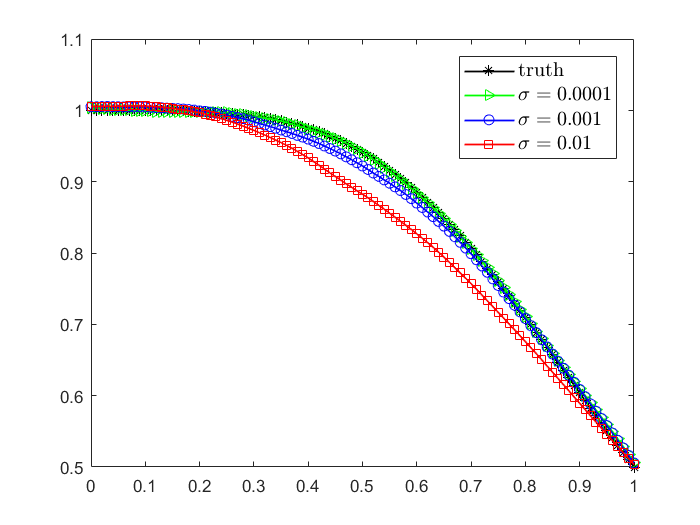}
        \caption{}
    \end{subfigure}
    \hfill
    \begin{subfigure}[b]{0.3\textwidth}
        \centering
        \includegraphics[width=\textwidth]{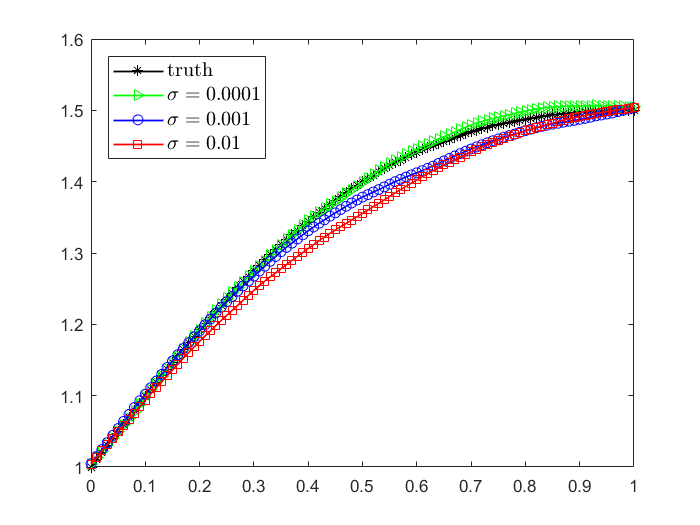}
        \caption{}
    \end{subfigure}
    \caption{The numerical results of $D_1(\cdot)$ (left), $D_2(\cdot)$ (middle) and $D_3(\cdot)$ (right) for different noises $\sigma$ with $\alpha=0.5$ in Example 2.}
    \label{recon-test2}
\end{figure}

\paragraph{Example 3.}   
In this test, we aim to reconstruct the diffusion terms \(D_1\), \(D_2\), and \(D_3\) for an infection model with more complex and nonlinear forms of diffusion. The diffusion of infected cells is governed by \(D_1(u_1) = 1 + \sin(\pi u_1)\), which introduces a sinusoidal variation in the diffusion, reflecting periodic fluctuations in the movement or interaction of infected cells. For healthy cells, the diffusion term is given by \(D_2(u_2) = \exp(u_2^2 - u_2)\), capturing an exponential growth and decay that may represent the tissue’s regenerative or deteriorative response to the presence of healthy cells. The diffusion term for immune cells follows \(D_3(u_3) = 1 + \exp(2u_3)\), reflecting an exponential increase in diffusion as immune cells accumulate, which may correspond to enhanced immune response as the number of activated immune cells grows.\\

This test case is designed to assess the method’s ability to handle nonlinear, complex diffusion terms while providing valuable insight into the model's adaptability to intricate biological processes. The reconstruction results for \(D_1(\cdot)\), \(D_2(\cdot)\), and \(D_3(\cdot)\) in Example 3 are shown in Figure \ref{recon-test3} for the parameter value \(\alpha = 0.2\). These results demonstrate the method's capability to accurately recover the complex forms of the diffusion terms, illustrating how the reconstructed profiles align with expected behaviors of infection dynamics, tissue repair, and immune response.
\begin{figure}[H]
    \centering
    \begin{subfigure}[b]{0.3\textwidth}
        \centering
        \includegraphics[width=\textwidth]{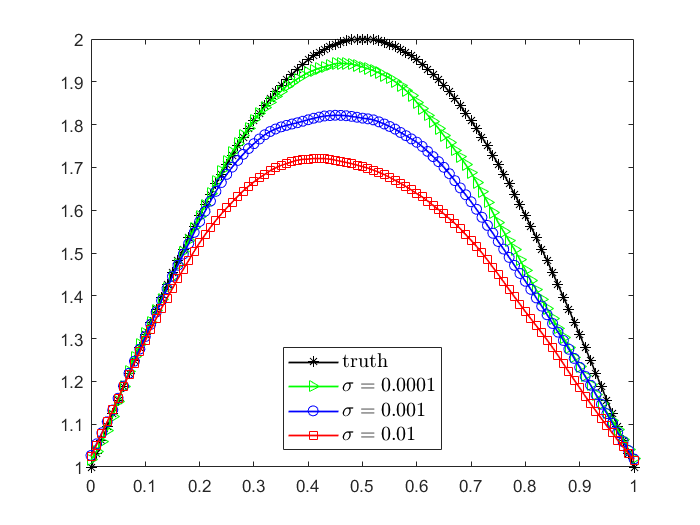}
        \caption{}
    \end{subfigure}
    \hfill
    \begin{subfigure}[b]{0.3\textwidth}
        \centering

        \includegraphics[width=\textwidth]{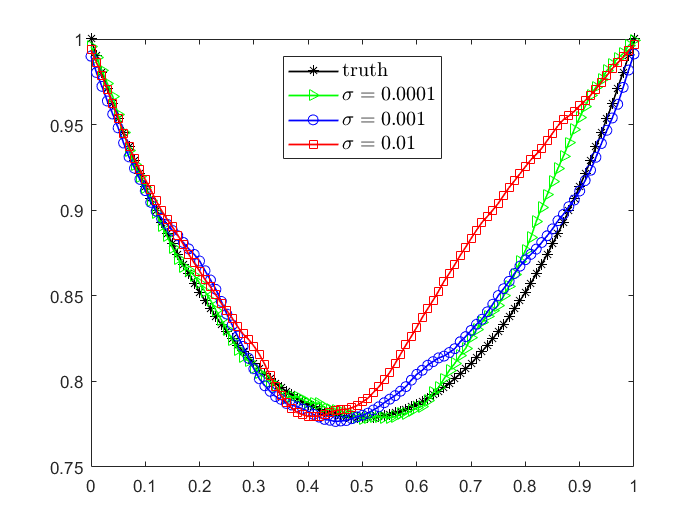}
        \caption{}
    \end{subfigure}
    \hfill
    \begin{subfigure}[b]{0.3\textwidth}
        \centering
        \includegraphics[width=\textwidth]{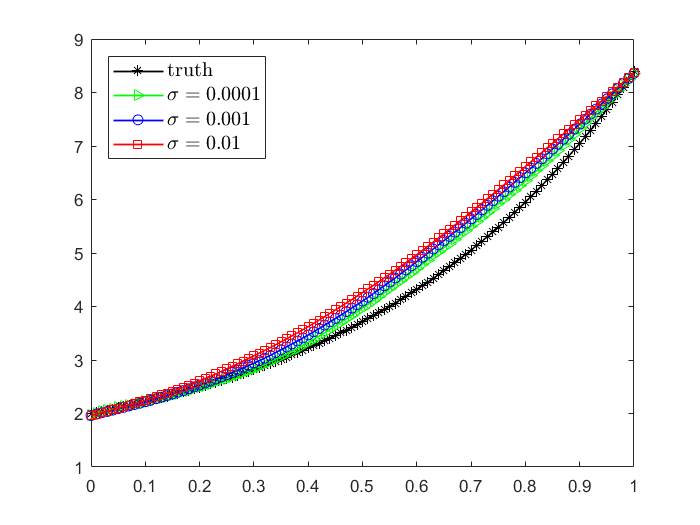}
        \caption{}
    \end{subfigure}
    \caption{The numerical results of $D_1(\cdot)$ (left), $D_2(\cdot)$ (middle) and $D_3(\cdot)$ (right) for different noises $\sigma$ with $\alpha=0.2$ in Example 3.}
    \label{recon-test3}
\end{figure}

\subsubsection{Stability and convergence}
The results presented in Figures \ref{recon-test1}, \ref{recon-test2}, and \ref{recon-test3} demonstrate that reconstruction accuracy and precision improve as noise levels decrease. As noise is reduced, the numerical solution increasingly aligns with the true solution, highlighting the effectiveness of the method in achieving higher precision with cleaner data. This emphasizes IREKM's sensitivity to noise and its ability to achieve greater accuracy in low-noise conditions. Furthermore, our approach successfully reconstructs various functional forms, including linear, quadratic, exponential, logistic, and sinusoidal behaviors. To further illustrate the stability and convergence of the proposed IREKM algorithm in addressing our inverse problem, Figure \ref{performance} depicts the logarithmic relative errors \(\log_{10}(E_n)\) and residuals \(\log_{10}(R_n)\) as functions of iteration count \(n\) across different noise levels.  

\begin{figure}[H]
    \centering
    \begin{subfigure}[b]{0.3\textwidth}
        \centering
\includegraphics[width=\textwidth]{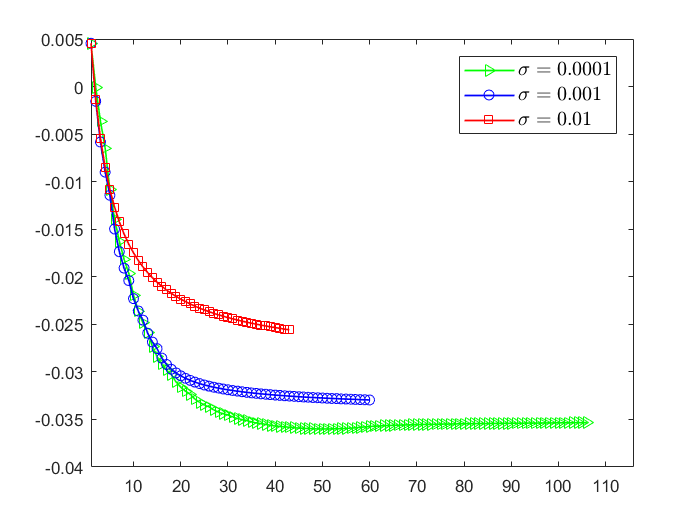}
        \caption{}
    \end{subfigure}
    \hfill
    \begin{subfigure}[b]{0.3\textwidth}
        \centering
\includegraphics[width=\textwidth]{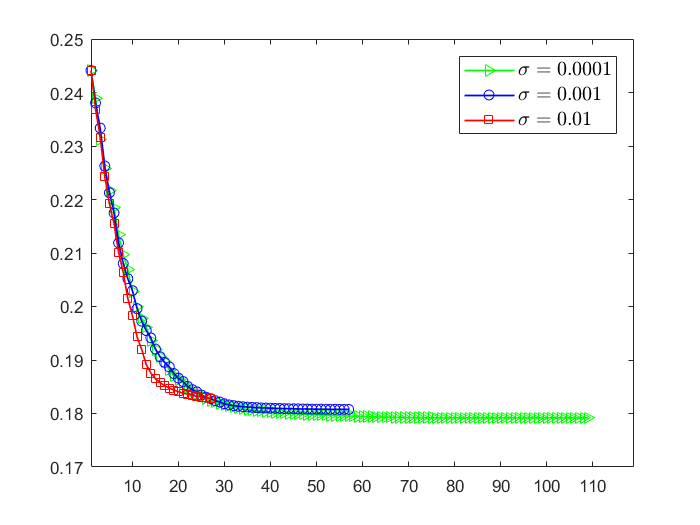}
        \caption{}
    \end{subfigure}
    \hfill
    \begin{subfigure}[b]{0.3\textwidth}
        \centering
\includegraphics[width=\textwidth]{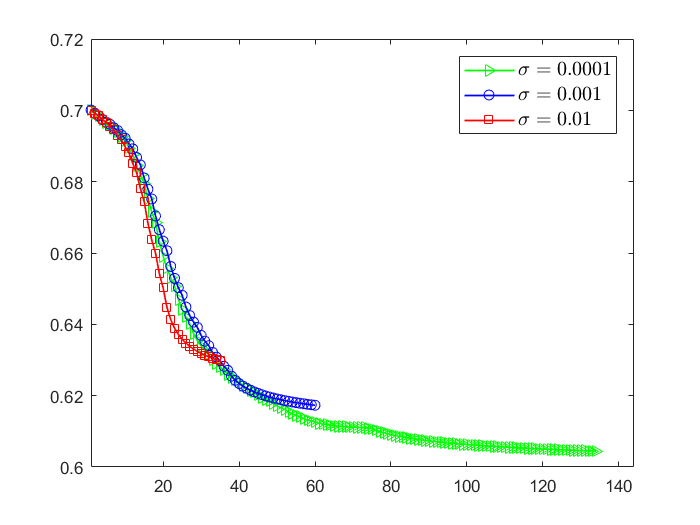}
        \caption{}
    \end{subfigure}
    \hfill
      \begin{subfigure}[b]{0.3\textwidth}
        \centering
\includegraphics[width=\textwidth]{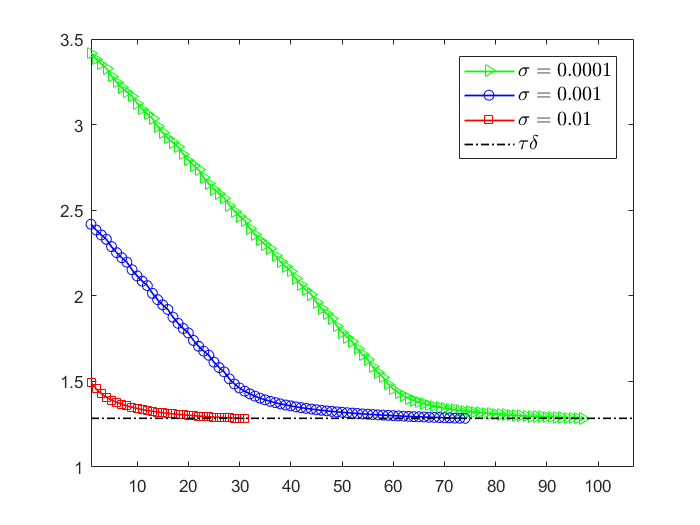}
        \caption{}
    \end{subfigure}
    \hfill
    \begin{subfigure}[b]{0.3\textwidth}
        \centering
\includegraphics[width=\textwidth]{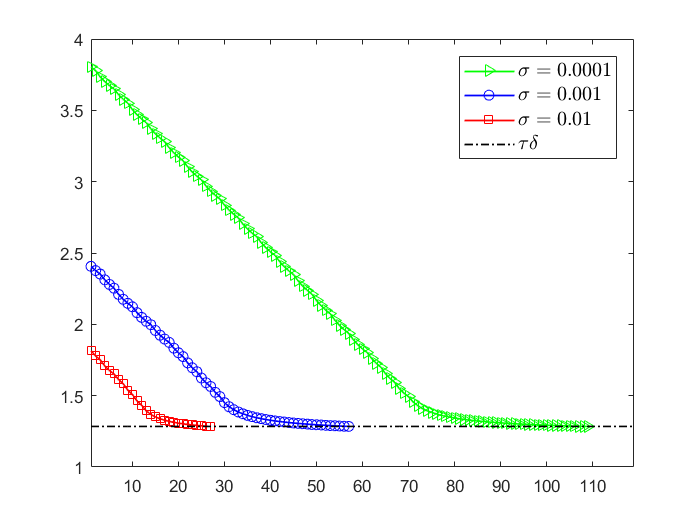}
        \caption{}
    \end{subfigure}
    \hfill
    \begin{subfigure}[b]{0.3\textwidth}
        \centering
\includegraphics[width=\textwidth]{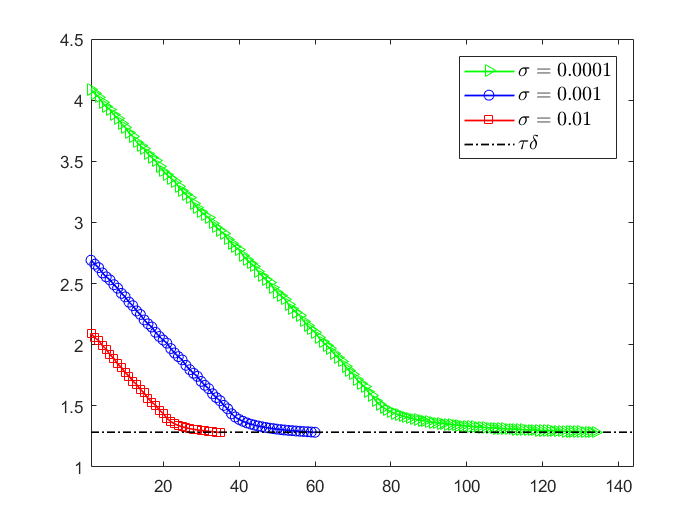}
        \caption{}
    \end{subfigure}
 \caption{Logarithmic relative errors (\(E_n\), top row) and logarithmic residuals (\(R_n\), bottom row) for different noise levels: \(\sigma = 0.0001\), \(\sigma = 0.001\), and \(\sigma = 0.01\). From left to right: results for Example 1, Example 2  and Example 3.}
    \label{performance}
\end{figure}

Figure \ref{performance} illustrates the stability  and convergence properties of the Iterative Regularizing Ensemble Kalman Method (IREKM) when applied to the inverse problem. As noise decreases, the approximation errors progressively diminish, demonstrating the method's robustness. The stopping criterion, based on the residual \(R_n\),  ensures proper algorithm termination by enforcing the condition  \(R_n \leq \tau \delta\), where \(\delta\) quantifies the noise level. This avoids overfitting and ensures convergence. The plotted residuals confirm the correct implementation of the stopping criterion, as the errors consistently decrease throughout the iterations. Additionally, the figures demonstrate that both the error and residual stabilize after a few iterations, indicating rapid convergence to an acceptable solution. Notably, when noise levels are lower, the algorithm requires more iterations to meet the stopping criterion. This is because achieving higher precision in low-noise scenarios demands further refinement of the solution, requiring additional computational steps to reduce the residual to the desired threshold.\\

In conclusion, the Iterative Regularizing Ensemble Kalman Method (IREKM) proves to be a robust and efficient approach for solving the inverse problem of identifying diffusion terms in the HIV infection model. The method exhibits strong stability and convergence properties, as approximation errors decrease with lower noise levels. Furthermore, IREKM shows notable sensitivity to noise, yielding precise reconstructions, particularly in low-noise scenarios.

\section{ Concluding Remarks}  \label{remarks}

In this study, we investigated an inverse problem associated with a time-fractional HIV infection model incorporating nonlinear diffusion. Our primary objective was to recover unknown diffusion functions using final time measurement data. Given the ill-posedness of the inverse problem and the presence of measurement noise, we employed a Bayesian inference framework combined with the Iterative Regularizing Ensemble Kalman Method (IREKM) to achieve stable and reliable reconstructions.  The numerical experiments demonstrated the effectiveness of the proposed methodology in reconstructing the unknown diffusion terms under various noise levels. The results indicated that the IREKM method successfully captured the complex diffusion dynamics of the HIV infection model while maintaining robustness against observational noise. Furthermore, the Bayesian approach allowed for uncertainty quantification, providing a comprehensive assessment of the estimated parameters.  

Key findings from this research include:
\begin{enumerate}[label=(\roman*)]
\item The proposed framework is capable of accurately recovering nonlinear diffusion terms in the HIV infection model, even under moderate noise conditions.
\item  The stability and convergence of the method were validated through multiple test cases, showing improved performance as noise levels decreased.
\item The sensitivity analysis confirmed that IREKM is effective in handling inverse problems for nonlocal and nonlinear models, making it a promising tool for studying complex biological systems.  
\end{enumerate}

While the proposed approach demonstrates strong performance, several directions for future research remain. Extending the method to higher-dimensional problems and incorporating real experimental data could offer additional insights into HIV infection dynamics. Furthermore, improving computational efficiency, particularly for large-scale simulations, remains an important area for development. Lastly, exploring alternative regularization techniques may further enhance the robustness of the inverse problem formulation. Overall, this study contributes to the growing field of inverse problems in fractional differential models and provides a computational framework to advance the understanding of HIV infection mechanisms. 
\newpage

\end{document}